\providecommand{\U}[1]{\protect \rule{.1in}{.1in}}
\numberwithin{equation}{section}
\newtheorem{Theorem}{Theorem}[section]
\newtheorem{lemma}[Theorem]{Lemma}
\newtheorem{remark}[Theorem]{Remark}
\newenvironment{proof}[1][Proof]{\noindent \textbf{#1.} }{\  \rule{0.5em}{0.5em}}
\begin{document}
\title{Well-posedness for a class of mean field type FBSDEs and classical solutions of related master equations}
\author{Tianjiao Hua \thanks{School of Mathematical Sciences, Shanghai Jiao Tong University, China (htj960127@sjtu.edu.cn)}
\and
Peng Luo \thanks{School of Mathematical Sciences, Shanghai Jiao Tong University, China (peng.luo@sjtu.edu.cn). Research supported
by the National Natural Science Foundation of China (No. 12101400).}}
\maketitle

\begin{abstract}
In this paper, we study a class of mean field type FBSDEs. We propose a class of motonotinity conditions, under which we show the uniformly Lipschitz continuity of the decoupling field and obtain the existence and uniqueness of solution. We further provide a representation result for the solution and the decoupling field. Finally, we obtain the regularity of the decoupling field and establish global well-posedness of classical solutions to related master equations. 
\end{abstract}
\textbf{Key words}: mean field type FBSDE, global solution, decoupling field, master equation.

\noindent\textbf{MSC-classification}: 60H30, 49N80.
\section{Introduction}

In the past few years, accompanying the research popularity of mean field games led by Huang et al. \cite{1183728987} and Lasry and Lions \cite{lasry2007mean}, mean field type FBSDEs gained a rapid development. Using stochastic maximum principle, mean field type FBSDE systems naturally arise when dealing with mean field game problems and mean field control problems (see \cite{carmona2013probabilistic,10.1214/14-AOP946}). In particular, for linear-quadratic (LQ for short) mean field game problems and mean field control problems where the mean field interaction is only through the expectations of the state and control, the Hamiltonian systems turn to be the following mean field type FBSDEs \eqref{FBSDE}, see Carmona and Delarue \cite{carmona2018probabilistic}, Acciaio et al. \cite{doi:10.1137/18M1196479}, Tian and Yu \cite{tian2023mean}, and so on. 

Motivated by this, in this paper, we consider the following mean field type FBSDEs:
\begin{equation}
    \left\{\begin{aligned}
X_t&=x+\int_0^t \left[b_{1}(s)X_{s}+b_{2}(s)Y_{s}+ b_{0}\left(s,\mathbb{E}[X_s],\mathbb{E}[Y_s]\right) \right] d s+\int_0^t \sigma\left(s, X_s, Y_s,\mathbb{E}[X_s],\mathbb{E}[Y_s]\right) d W_s, \\
Y_t&=h_{1}X_{T}+h_{2}\left(\mathbb{E}[X_T]\right)+\int_t^T\left[f_{1}(s)X_{s}+f_{2}(s)Y_{s}+f_{0}\left(s,\mathbb{E}[X_s],\mathbb{E}[Y_s]\right)\right]d s-\int_t^T Z_sd W_s.
\end{aligned}\right.\label{FBSDE}
\end{equation}
From both theoretical and practical perspectives, it is appealing to study the global solution to coupled FBSDEs. However, it is well known that classical coupled FBSDEs may be unsolvable on an arbitrarily large time interval only under the uniformly Lipschitz continuity of the coefficients and a counterexample was given in Antonelli \cite{antonelli1993backward}. Hence, some more assumptions need to be imposed to ensure global solvability. As for mean field FBSDEs, the additional mean field term brings new difficulty and solvability results are usually based on expansion and development of methods for FBSDEs. Carmona and Delarue \cite{carmona2013mean} obtained an existence result on arbitrarily large time interval for the first time by combining the result about FBSDEs in \cite{delarue2002existence} and Schauder's fixed point theorem applied to probability measures. Moreover, Bensoussan et al. \cite{bensoussan2015well} extended the continuation method originally introduced by Hu and Peng \cite{hu1995solution}, Yong \cite{yong1997finding}, and Peng and Wu \cite{peng1999fully} to study mean field FBSDEs by adjusting the corresponding   monotonicity conditions. Recently, adopting continuation method,  domination-monotonicity conditions are introduced to solve mean field type FBSDEs in \cite{tian2023mean}. 
 
 In this paper, we aim to establish the well-posedness of the solution to the mean field FBSDE \eqref{FBSDE} under some additional monotonicity conditions on the coefficients. Our approach is a development of the decoupling field method originally introduced by Ma et al. \cite{ma2015well} for classical coupled FBSDEs. It is worth emphasizing that getting a uniformly Lipschitz constant of the decoupling field is the key ingredient to extend the solution from small duration to arbitrarily large time interval. In the literature, the decoupling field related to mean field FBSDEs usually includes a probability measure as one of the state variables (see \cite{chassagneux2022probabilistic}). Thus one needs to establish the uniformly Lipschitz continuity on the Wassertein space, which is quite difficult in general. Relying on the structure of mean field type FBSDE \eqref{FBSDE}, where the mean field interaction is only through the expectations, we first establish some stability results. Based on these results, we are able to introduce the decoupling field by including the expectation of a probability measure as one of the state variables. Under some monotonicity conditions, we further establish the uniformly Lipschitz continuity of the decoupling field. Finally, we obtain the existence and uniqueness of global solution of mean field type FBSDE \eqref{FBSDE}. We emphasize that our approach does not require the non-degeneracy of the diffusion process and can solve some mean field type FBSDEs which can not be solved by the continuation method (see Subsection \ref{sec:com} for more discussions). In particular, we provide a new situation where the general assumption (H3) in \cite{chassagneux2022probabilistic} is ensured (see Remark \ref{rem:del}).

In addition to the well-posedness result, we obtain a representation result for the unique solution of mean field FBSDE \eqref{FBSDE} and the corresponding decoupling field. This further helps us to obtain the regularity of the decoupling field and establish global well-posedness of classical solutions to related master equations. When the data are sufficient smooth, the master equation usually admits a classical solution on a small time horizon, see, e.g. \cite{carmona2018probabilistic,bensoussan2019control,cardaliaguet2022splitting,gangbo2015existence}. However, the global well-posedness of classical solutions to the master equations is more challenging. In the literature, global well-posedness of the master equation is usually established under three types of monotonicity conditions: the well-known Lasry-Lions monotonicity condition, see e.g. \cite{cardaliaguet2019master,lasrymaster,carmona2018probabilistic,chassagneux2022probabilistic,wellposednesszhang,bayraktar2018analysis}; displacement monotonicity condition, see e.g. \cite{gangbo2022global,gangbo2022mean}; anti-monotonicity condition, see e.g. \cite{mou2022mean}. It is worth emphasizing that our monotonicity condition is quite distinct and there is no necessary implication from our monotonicity condition to  the above three kind of monotonicity conditions. We show that the decoupling field we define for mean field FBSDE \eqref{FBSDE} is indeed a globally classical solution of the corresponding master equation. The relationship of the decoupling field generated by mean field FBSDEs with the classical solution of the corresponding master equations was established in \cite{chassagneux2022probabilistic}. Compared with \cite{chassagneux2022probabilistic}, we establish the smoothness of the decoupling field with different techniques and we do not require the diffusion process to be bounded. More recently, Li et al. \cite{li2023linear} also investigate a similar type of master equations arising from mean field games of controls and obtained global well-posedness. The non-degeneracy of the common noise is critical in their work. By contrast, we consider the situation without common noise.

 The rest of the paper is organized as follows. In section 2, we introduce some notations. In section 3, we establish the well-posedness of mean field FBSDEs \eqref{FBSDE} and compare our results with some existing results. In section 4, we provide a representation result for mean field FBSDEs \eqref{FBSDE}, while obtain global well-posedness of classical results of related master equations in section 5.
\section{Notations}

 For a given $T>0$, let $(\Omega, \mathcal{F},(\mathcal{F}_{t})_{0\leq t \leq T}, \mathbb{P})$ be a completed filtered probability space with $(\mathcal{F}_{t})_{0\leq t \leq T}$ generated by a $d$-dimensional Brownian motion $\left(W_t\right)_{t \geqslant 0}$. Unless otherwise stated, all equalities and inequalities between random variables and processes will be understood in the $\mathbb{P}$-a.s. and $\mathbb{P} \otimes d t$-a.e. sense, respectively. $|\cdot|$ denotes the Euclidean norm and $x\cdot y$ denotes the inner product of $x$ and $y$. For $x, y \in \mathbb{R}^{n}, x \leq y$ is understood component-wisely, i.e., $x \leq y$ if and only if $x^{i} \leq y^{i}$ for all $i=1, \ldots, n$.
$C^k(\mathbb{R}^{n};\mathbb{R}^k)$ denotes the space of all $\mathbb{R}^k$-valued and continuous functions $f$ on $\mathbb{R}^{n}$ with continuous derivatives up to order $k$.
$C^0([0,T] \times \mathbb{R}^{n};\mathbb{R}^k)$ denotes the space of all $\mathbb{R}^k$-valued and continuous functions $f$ on $[0,T] \times \mathbb{R}^{n}$.
$C^{1,2}([0, T] \times \mathbb{R}^{n};\mathbb{R}^k)$ denotes the space of  all $\mathbb{R}^k$-valued and continuous functions $f$ on $[0, T] \times \mathbb{R}^{n}$ whose partial derivatives $\frac{\partial f}{\partial t}, \frac{\partial f}{\partial x_i}, \frac{\partial^2 f}{\partial x_i \partial x_j}, 1 \leq i, j \leq n$, exist and are continuous. For $p \ge 1$, we introduce some Banach spaces as follows:
\begin{itemize}
	\item $\mathbb{S}^{\infty}\left(\mathbb{R}^{n}\right)$ the set of $n$-dimensional continuous adapted processes $Y$ on $[0, T]$ such that
	\begin{equation*}
		\|Y\|_{\mathbb{S}^{\infty}\left(\mathbb{R}^{n}\right)}:=\left\|\sup _{0 \leq t \leq T}\left|Y_{t}\right|\right\|_{\infty}<\infty ;
	\end{equation*}
	\item $L^{p}\left(\mathcal{F}_{t} ; \mathbb{R}^{n}\right)$ the set of $n$-dimensional $\mathcal{F}_{t}$-measurable random variables $\xi$ such that
	\begin{equation*}
		E\left[ |\xi|^{p}\right]^{\frac{1}{p}}<\infty;
	\end{equation*}
	\item $L^{\infty}\left(\mathcal{F}_{t} ; \mathbb{R}^{n}\right)$ the set of $n$-dimensional $\mathcal{F}_{t}$-measurable random variables $\xi$ such that
	\begin{equation*}
		\|\xi\|_{\infty}<\infty;
	\end{equation*}
	\item  $\mathbb{S}^{p}\left(\mathbb{R}^{n}\right)$ the set of adapted and continuous processes $X$ valued in $\mathbb{R}^{n}$ such that
	\begin{equation*}
		\|X\|_{\mathbb{S}^{p}\left(\mathbb{R}^{n}\right)}^{p}:=E\left[\sup _{0 \leq t \leq T} \left|X_{t}\right|^{p}\right]<\infty;
	\end{equation*}
	\item $\mathbb{H}^{p}\left(\mathbb{R}^{n \times d}\right)$ the set of predictable processes $Z$ valued in $\mathbb{R}^{n \times d}$ such that
	\begin{equation*}
		\|Z\|_{\mathbb{H}^{p}\left(\mathbb{R}^{n \times d}\right)}^{p}:=E\left[\left(\int_{0}^{T}\left|Z_{u}\right|^{2} d u\right)^{p / 2}\right]<\infty.
	\end{equation*}
\end{itemize}

\indent In the sequel, we will use the notation $\mathcal{L}(\Theta)$ to denote the law of the random variable $\Theta$ and use the notation $\mathbb{E}[\Theta]$ to denote the expectation of the random vairable $\Theta$. Let $W_{2}$ denote 2-Wassertein's distance on $\mathcal{P}_{2}(\mathbb{R}^{n})$ defined by 
\begin{equation*}
    W_{2}(\mu_{1},\mu_{2}) \triangleq \inf\left\{  \left[    \int_{\mathbb{R}^{n}\times \mathbb{R}^{n}}|x-y|^{2}\pi(dx,dy)\right]^{\frac{1}{2}}\pi\in \mathcal{P}_{2}(\mathbb{R}^{n}\times \mathbb{R}^{n}) \text{ with marginals } \mu_{1} \text{ and }\mu_{2}    \right\}.
\end{equation*}
Throughout the paper, for any $x \in \mathbb{R}$ and any function $\phi(x)$, we will use the following convention
 \begin{equation*}
    \frac{\phi(x)-\phi(x)}{x-x}:=0.
\end{equation*}
\section{Solvability for a class of mean field type FBSDEs}
In this paper, we consider the following class of mean field type FBSDEs:
\begin{equation}
    \left\{\begin{aligned}
X_t&=x+\int_0^t \left[b_{1}(s)X_{s}+b_{2}(s)Y_{s}+ b_{0}\left(s,\mathbb{E}[X_s],\mathbb{E}[Y_s]\right) \right] d s+\int_0^t \sigma\left(s, X_s, Y_s,\mathbb{E}[X_s],\mathbb{E}[Y_s]\right) d W_s, \\
Y_t&=h_{1}X_{T}+h_{2}\left(\mathbb{E}[X_T]\right)+\int_t^T\left[f_{1}(s)X_{s}+f_{2}(s)Y_{s}+f_{0}\left(s,\mathbb{E}[X_s],\mathbb{E}[Y_s]\right)\right]d s-\int_t^T Z_sd W_s.
\end{aligned}\right.\label{mean field FBSDE}
\end{equation}
    This class of mean field type FBSDEs naturally appear in price impact problems (see \cite{carmona2018probabilistic}) and mean field linear quadratic optimal control problems (see \cite{doi:10.1137/18M1196479,tian2023mean}). In the sequel, we will study the existence and uniqueness theorem for \eqref{mean field FBSDE}. Let $K$ be a given positive constant, we introduce the following assumptions.
    \begin{itemize}
        \item[\textbf{(A1)}] (i) The mappings $b_{1}: [0, T] \rightarrow  \mathbb{R}$, $b_{2},f_{1}: [0, T] \rightarrow  \mathbb{R}^{n}$, $f_{2}: [0, T] \rightarrow  \mathbb{R}^{n\times n}$ are deterministic measurable functions and bounded by $K$ and $b_{0}(\omega,t,\bar{x},\bar{y}):\Omega\times[0,T]\times \mathbb{R} \times \mathbb{R}^{n}\rightarrow \mathbb{R}$ and $f_{0}(\omega,t,\bar{x},\bar{y}):\Omega\times[0,T]\times \mathbb{R} \times \mathbb{R}^{n}\rightarrow \mathbb{R}^{n}$ are measurable and uniformly Lipschitz continuous with respect to $\bar{x}$ and $\bar{y}$, i.e., 
         \begin{equation}
              \begin{aligned}
         \left|b_{0}(t,\bar{x}, \bar{y})-b_{0}\left(t,\bar{x}^{\prime},\bar{y}^{\prime}\right)\right| & \leq K\left[\left|\bar{x}-\bar{x}^{\prime}\right|+ \left|\bar{y}-\bar{y}^{\prime}\right|\right],\\
         \left|f_{0}(t,\bar{x}, \bar{y})-f_{0}\left(t,\bar{x}^{\prime},\bar{y}^{\prime}\right)\right| & \leq K\left[\left|\bar{x}-\bar{x}^{\prime}\right|+ \left|\bar{y}-\bar{y}^{\prime}\right|\right].
        \end{aligned}
            \end{equation}
            (ii) $\sigma: \Omega\times [0,T]\times \mathbb{R} \times \mathbb{R}^{n} \times \mathbb{R} \times \mathbb{R}^{n} \rightarrow \mathbb{R}^{n\times d}$ is measurable and uniformly Lipschitz continuous with respect to all the variables, i.e., 
            \begin{equation}
         \left|\sigma(t, x, y, \bar{x}, \bar{y})-\sigma\left(t, x^\prime,y^\prime,\bar{x}^{\prime},\bar{y}^{\prime}\right)\right|  \leq K\left[\left|x-x^{\prime}\right|+\left|y-y^{\prime}\right|+\left|\bar{x}-\bar{x}^{\prime}\right|+ \left|\bar{y}-\bar{y}^{\prime}\right|\right].
            \end{equation}
        (iii)$h_{1}$ is a n-dimensional constant vector  bounded by $K$ and $h_{2}:\Omega \times \mathbb{R}\rightarrow \mathbb{R}^{n}$ is measurable and uniform Lipschitz continuous, i.e. 
            \begin{equation*}
                \left| h_{2}(\bar{x})-h_{2}(\bar{x}^{\prime}) \right|\leqslant K\left( \left|\bar{x}-\bar{x}^{\prime} \right|\right).
            \end{equation*}
            (iiii)
            The following integrability condition holds:
    \begin{equation*}
       \mathbb{E}\left[\left( \int_{0}^{T}|b_{0}(t,0,0)|dt\right)^{2}+\left(\int_{0}^{T}|f_{0}(t,0,0)|dt\right)^{2}+\int_{0}^{T}|\sigma_{0}(t,0,0,0,0)|^2dt+|h_{2}(0)|^{2}\right]<\infty.
    \end{equation*}
         \end{itemize}
  In order to obtain the global solvability for mean field type FBSDE \eqref{mean field FBSDE}, we introduce the following notations. For ease of notations, for $y_{1},y_{2}\in \mathbb{R}^{n}$, we denote 
\begin{equation*}
    (y_{1}^{(1,i)},y_{2}^{(i+1,n)}):=(y_{1}^{1},y_{1}^{2},\cdots,y_{1}^{i},y_{2}^{i+1},\cdots,y_{2}^{n}).
\end{equation*}
For $\left(\bar{x}_{1}, \bar{y}_{1}\right),\left(\bar{x}_{2}, \bar{y}_{2}\right)\in\mathbb{R}\times\mathbb{R}^n$, let $\theta_1:=\left(\bar{x}_{1}, \bar{y}_{1}\right),\theta_2:=\left(\bar{x}_{2}, \bar{y}_{2}\right)$ and for $i,j=1,2,\cdots,n$, we denote
\begin{equation}
	\begin{aligned}
		h_{2}^{i}(\bar{x}_{1},\bar{x}_{2}) &\triangleq \frac{h^{i}_{2}(\bar{x}_{1})-h^{i}_{2}(\bar{x}_{2})}{\bar{x}_{1}-\bar{x}_{2}},\\
	b_{3}(t,\theta_{1},\theta_{2})&\triangleq \frac{b_{0}(t,\bar{x}_{1},\bar{y}_{1})-b_{0}(t,\bar{x}_{2},\bar{y}_{1})}{\bar{x}_{1}-\bar{x}_{2}},\\
		f_{3}^{i}(t,\theta_{1},\theta_{2}) &\triangleq \frac{f^{i}_{0}(t,\bar{x}_{1},\bar{y}_{1})-f^{i}(t,\bar{x}_{2},\bar{y}_{1})}{\bar{x}_{1}-\bar{x}_{2}},\\
	b_{4}^{j}(t,\theta_{1},\theta_{2}) &\triangleq \frac{b_{0}(t,\bar{x}_{2},\bar{y}_{2}^{(1,j-1)},\bar{y}_{1}^{(j,n)})-b_{0}(t,\bar{x}_{2},\bar{y}_{2}^{(1,j)},\bar{y}_{1}^{(j+1,n)})}{\bar{y}_{1}^{j}-\bar{y}_{2}^{j}},\\
	f_{4}^{ij}(t,\theta_{1},\theta_{2})&\triangleq \frac{f^{i}_{0}(t,\bar{x}_{2},\bar{y}_{2}^{(1,j-1)},\bar{y}_{1}^{(j,n)})-f^{i}_{0}(t,\bar{x}_{2},\bar{y}_{2}^{(1,j)},\bar{y}_{1}^{(j+1,n)})}{\bar{y}_{1}^{j}-\bar{y}_{2}^{j}},
		\end{aligned} \label{quadratic notation}
\end{equation}
and $b_{4}(t,\theta_{1},\theta_{2}) \triangleq (b_{4}^{1},b_{4}^{2},\cdots,b_{4}^{n})(t,\theta_{1},\theta_{2})$, $f_{4}^{i}(t,\theta_{1},\theta_{2}) \triangleq (f_{4}^{i1},f_{4}^{i2},\cdots,f_{4}^{in})(t,\theta_{1},\theta_{2})$. With these notations at hand, we make the following monotonicity assumptions.
\begin{itemize}
\item[\textbf{(A2)}]
    For $1\leq i,j\leq n$ satisfying $i\neq j$ and $t\in[0,T]$, one of the following holds:\\
    (i)  $f_{1}^{i}(t) \geq 0$, $h_{1}^{i} \geq 0$, $b_{2}^{i}(t)\leq 0$ and $f_{2}^{ij}(t)\geq 0$.\\
 (ii) $f_{1}^{i}(t) \leq 0$, $h_{1}^{i} \leq 0$, $b_{2}^{i}(t)\geq 0$ and $f_{2}^{ij}(t)\geq 0$.\\
 \item[\textbf{(A3)}] For $1\leq i,j \leq n $ satisfying $i\neq j$, $t\in[0,T]$ and any $\theta_{1}=(\bar{x}_{1},\bar{y}_{1}),\theta_{2}=(\bar{x}_{2},\bar{y}_{2}) \in \mathbb{R}\times \mathbb{R}^{n}$, one of the following holds:\\
    (i) $f_{1}^{i}(t)+\mathbb{E}[f_{3}^{i}(t,\theta_{1},\theta_{2})]\geq 0, h_{1}^{i}+\mathbb{E}[h_{2}^{i}(\bar{x}_{1},\bar{x}_{2})] \geq 0$, $b_{2}^{i}(t)+\mathbb{E}[b_{4}^{i}(t,\theta_{1},\theta_{2})]\leq 0 $ and  $f^{ij}_{2}(t)+\mathbb{E}[f^{ij}_{4}(t,\theta_{1},\theta_{2})]\geq 0$. \\
 (ii) $f_{1}^{i}(t)+\mathbb{E}[f_{3}^{i}((t,\theta_{1},\theta_{2})]\leq 0, h_{1}^{i}+\mathbb{E}[h_{2}^{i}(\bar{x}_{1},\bar{x}_{2})] \leq 0$ , $b_{2}^{i}(t)+\mathbb{E}[b_{4}^{i}(t,\theta_{1},\theta_{2})]\geq 0 $ and  $f^{ij}_{2}(t)+\mathbb{E}[f^{ij}_{4}(t,\theta_{1},\theta_{2})]\geq 0$.
\end{itemize}
\begin{remark}
It is obvious that $f_{1}^{i}+f_{3}^{i}\geq 0$ implies that $f_{1}^{i}+\mathbb{E}[f_{3}^{i}] \geq 0$. In particular they are equivalent when $f_0$ is deterministic. Similar arguments hold for other inequalities in assumption (A3).
\end{remark}

\indent  The main result of this section is on the existence and uniqeness of global solution of mean field type FBSDE \eqref{mean field FBSDE}. 
\begin{Theorem}
Under assumptions $(A1)-(A3)$, mean field type FBSDE \eqref{mean field FBSDE}  has a unique solution $(X,Y,Z) \in \mathbb{S}^{2}(\mathbb{R}) \times \mathbb{S}^{2}(\mathbb{R}^{n})\times \mathbb{H}^{2}(\mathbb{R}^{n\times d})$.\label{global existence}
\end{Theorem}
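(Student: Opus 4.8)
The plan is to combine the decoupling field method with a continuation (step-by-step gluing) argument, exploiting the fact that in \eqref{mean field FBSDE} the mean field interaction enters only through the deterministic quantities $\mathbb{E}[X_s]$ and $\mathbb{E}[Y_s]$. First I would prove local well-posedness: on a short interval $[T-\delta,T]$, with $\delta$ depending only on $K$ and the data in (A1), a Banach fixed point argument in $\mathbb{S}^2\times\mathbb{S}^2\times\mathbb{H}^2$ — contracting first over the pair of deterministic curves $s\mapsto(\mathbb{E}[X_s],\mathbb{E}[Y_s])$ and then over $(X,Y,Z)$ — yields a unique local solution together with a decoupling field.

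To build a decoupling field that stays on a finite-dimensional state space rather than on a Wasserstein space, I would use that the law enters \eqref{mean field FBSDE} only via $(\mathbb{E}[X_\cdot],\mathbb{E}[Y_\cdot])$: freezing $\mathbb{E}[X_s],\mathbb{E}[Y_s]$ to given bounded measurable curves $(\bar x_s,\bar y_s)$ turns \eqref{mean field FBSDE} into a classical coupled FBSDE with random Lipschitz coefficients, whose local decoupling field $u(t,x)=u(t,x;\bar x_\cdot,\bar y_\cdot)$ satisfies $Y_t=u(t,X_t)$. Treating the frozen curves as extra arguments and using the stability estimates mentioned in the introduction, $u$ is jointly Lipschitz, and the genuine solution is recovered from the fixed point $\bar x_s=\mathbb{E}[X_s]$, $\bar y_s=\mathbb{E}[u(s,X_s)]$ for the induced McKean--Vlasov forward SDE; this is where the mean field structure is resolved, and the stability estimates keep this fixed point under control as the horizon grows.

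The crux is a horizon-independent Lipschitz bound for $u$ in $x$, which ultimately powers the continuation. I would take two initial states $x_1,x_2$ at a time $r$, set $\delta X=X^1-X^2$, $\delta Y=Y^1-Y^2$, $\delta Z=Z^1-Z^2$, and note that (once the mean field fixed point is resolved) $(\delta X,\delta Y,\delta Z)$ solves a linear FBSDE whose coefficients are precisely the genuine coefficients $b_1,b_2,f_1,f_2,h_1$ corrected by the difference quotients $b_3,b_4,f_3,f_4,h_2^i$ from \eqref{quadratic notation} integrated against the joint law of the two solutions — hence the expectations appearing in (A3). Applying It\^o's formula to the scalar products $\delta X\,\delta Y^i$, component by component, and invoking the sign conditions in (A2)--(A3) — $b_2^i+\mathbb{E}[b_4^i]\le 0$, $f_1^i+\mathbb{E}[f_3^i]\ge 0$ and $h_1^i+\mathbb{E}[h_2^i]\ge 0$ to kill the bad cross terms, and $f_2^{ij}+\mathbb{E}[f_4^{ij}]\ge 0$ to handle the off-diagonal coupling between the components of $Y$ by a quasi-monotonicity/comparison argument — I expect to reach $|\delta Y_t|\le C|\delta X_t|$ with $C$ depending only on $T$ and $K$ through the Lipschitz data, not through the number of continuation steps, i.e. $u$ is $C$-Lipschitz in $x$ uniformly. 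This is the step I expect to be the main obstacle: the component-wise bookkeeping, the presence of the mean field correction terms inside the sign conditions, and the need to absorb the $\delta Z$ contributions without any non-degeneracy of $\sigma$ all require care.

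Given the uniform Lipschitz bound, the continuation is routine: the local existence length $\delta$ depends only on $K$ and on the Lipschitz constant of the terminal condition, so the solution extends from $[T-\delta,T]$ to $[T-2\delta,T-\delta]$ (the new terminal datum being $u$ at the splitting time, again $C$-Lipschitz), and after finitely many steps covers $[0,T]$. Uniqueness on $[0,T]$ then follows from the same variational estimate, which forces any two solutions to share the same decoupling field and hence solve the same McKean--Vlasov forward SDE, together with uniqueness for that forward equation; the integrability statement $(X,Y,Z)\in\mathbb{S}^2(\mathbb{R})\times\mathbb{S}^2(\mathbb{R}^n)\times\mathbb{H}^2(\mathbb{R}^{n\times d})$ comes from the integrability condition in (A1) and the Burkholder--Davis--Gundy inequality.
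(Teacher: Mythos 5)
Your overall architecture matches the paper's: local solvability near $T$, a decoupling field $U(t,x,\nu)$ that lives on the finite-dimensional space $\mathbb{R}\times\mathbb{R}$ because the law enters only through expectations, a horizon-independent Lipschitz constant for $U$, and a pasting over finitely many subintervals whose length depends only on that constant. The gap is at the step you yourself flag as the main obstacle: the uniform Lipschitz bound. Applying It\^o's formula to the products $\delta X\,\delta Y^i$ is the computation underlying the Peng--Wu continuation method, and it needs a coercive monotonicity structure that (A2)--(A3) do not provide: the resulting drift contains terms of the form $\sum_j\bigl(b_2^j+\mathbb{E}[b_4^j]\bigr)\,\delta Y^j\,\delta Y^i$, which are not sign-definite under the componentwise conditions $b_2^j\le 0$, and the paper's Example 3 is built precisely so that the domination-monotonicity needed for such duality arguments fails while (A1)--(A3) hold. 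So the It\^o-product computation does not close as proposed.

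What the paper actually does exploits the fact that the forward component is scalar. The Lipschitz bound in $x$ for the frozen equation is imported from prior work under (A2). For the dependence on $\nu=\mathbb{E}[\eta]$ (Lemma \ref{small bound}), it takes expectations to reduce the McKean--Vlasov system to a deterministic forward--backward ODE for $(\mathbb{E}[X],\mathbb{E}[Y])$, forms difference quotients $\nabla X,\nabla Y$ normalized by $\mathbb{E}[X^{t,\eta}_{T-\delta}]-\mathbb{E}[X^{t,\bar\eta}_{T-\delta}]$, shows $\mathbb{E}[\nabla X]$ stays strictly positive, and studies the ratio $\tilde Y=\mathbb{E}[\nabla Y]\,\mathbb{E}[\nabla X]^{-1}$, which solves a vector Riccati-type ODE. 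The sign conditions enter exactly as hypotheses of an ODE comparison theorem: $h_1^i+\mathbb{E}[h_2^i]\ge 0$ and $f_1^i+\mathbb{E}[f_3^i]\ge 0$, together with the quasi-monotonicity $f_2^{ij}+\mathbb{E}[f_4^{ij}]\ge 0$ for $i\ne j$, give $\tilde Y\ge 0$; then $b_2^i+\mathbb{E}[b_4^i]\le 0$ makes the quadratic term favorable, so $\tilde Y$ is dominated by the solution of a linear ODE and hence bounded by a constant depending only on $n,K,T$. This ratio-plus-comparison mechanism, not a duality product, is what converts (A2)--(A3) into a horizon-independent Lipschitz constant, and it is available only because $X$ is one-dimensional. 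Without it, or an equivalent substitute, your continuation cannot start, since the local existence interval would shrink with each step.
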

The key idea to get the global solution is to establish some crucial estimates and use a pasting technique. Following this idea, we first introduce the following mean field type FBSDE.
 \begin{equation}
   \left\{\begin{aligned}
X_s^{t,\eta}&=\eta+\int_t^s \left[b_{1}(r)X_r^{t,\eta}+b_{2}(r)Y_r^{t,\eta}+b_{0}\left(r,\mathbb{E}[X_r^{t,\eta}],\mathbb{E}[Y_r^{t,\eta}]\right)\right] d r\\
&\quad+\int_t^s \sigma\left(r, X_r^{t,\eta}, Y_r^{t,\eta},\mathbb{E}[X_r^{t,\eta}],\mathbb{E}[Y_r^{t,\eta}]\right) d W_r, \\
Y_s^{t,\eta}&=h_{1}X_T^{t,\eta}+h_{2}(\mathbb{E}[X_T^{t,\eta}])+\int_s^T\left[f_{1}(r)X_r^{t,\eta}+f_{2}(r)Y_r^{t,\eta}+f_{0}\left(r,\mathbb{E}[X_r^{t,\eta}],\mathbb{E}[Y_r^{t,\eta}]\right)\right]d r\\
&\quad -\int_s^T Z_r^{t,\eta} d W_r.
\end{aligned}\right. \label{mean field fbsde}
\end{equation}
We will make the following solvabilty assumption for \eqref{mean field fbsde}.\\ 
\textbf{Assumption (H).}
There exists $s\in[0,T)$ such that for any $t\in[s,T]$ and $\eta \in L^{2}(\Omega,\mathcal{F}_{t},\mathbb{P};\mathbb{R})$, FBSDE \eqref{mean field fbsde} has a unique solution $(X^{t,\eta},Y^{t,\eta},Z^{t,\eta}) \in \mathbb{S}^{2}(\mathbb{R}) \times \mathbb{S}^{2}(\mathbb{R}^{n})\times \mathbb{H}^{2}(\mathbb{R}^{n\times d})$.
\begin{remark}  
We remark that under assumption (A1), assumption (H) is satisfied with $s$ being sufficiently close to $T$ (see \cite[Theorem 4.24]{carmona2018probabilistic}).
\end{remark}

\indent  For the subsequent analysis, under assumption (H), we introduce, for any $x \in \mathbb{R}$, the following FBSDE (associated to the system \eqref{mean field fbsde}),
\begin{equation}
    \left\{\begin{aligned}
X_s^{t,x,\eta}&=x+\int_t^s \left[b_{1}(r)X_r^{t,x,\eta}+b_{2}(r)Y_r^{t,x,\eta}+b_{0}\left(r,\mathbb{E}[X_r^{t,\eta}],\mathbb{E}[Y_r^{t,\eta}]\right)\right] d r\\
&\quad+\int_t^s \sigma\left(r, X_r^{t,x,\eta}, Y_r^{t,x,\eta},\mathbb{E}[X_r^{t,\eta}],\mathbb{E}[Y_r^{t,\eta}]\right)d W_r, \\
 Y_s^{t,x,\eta}&=h_{1}X_T^{t,x,\eta}+h_{2}\left(\mathbb{E}[X_T^{t,\eta}]\right)+\int_s^T\left[f_{1}(r)X_r^{t,x,\eta}+f_{2}(r)Y_r^{t,x,\eta}+f_{0}\left(r,\mathbb{E}[X_r^{t,\eta}],\mathbb{E}[Y_r^{t,\eta}]\right)\right]
 d r\\
 &\quad -\int_s^T Z_r^{t,x,\eta} d W_r,
\end{aligned}\right. \label{fbsde}
 \end{equation}
where $X^{t,\eta}, Y^{t,\eta}$ are the first two components of the unique solution of \eqref{mean field fbsde}.\\
 We will first introduce the following two stability results, i.e, Lemma \ref{lemma} and Lemma \ref{small bound}. 
\begin{lemma}
     Suppose assumptions $(H)$ and $(A1)-(A2)$ hold, for any $t\in[s,T]$ and $\eta,\bar{\eta}\in L^{2}(\Omega,\mathcal{F}_{t},\mathbb{P};\mathbb{R})$, let $(X^{t,x,\eta},Y^{t,x,\eta},Z^{t,x,\eta})$ (resp. $(X^{t,x,\bar{\eta}},Y^{t,x,\bar{\eta}},Z^{t,x,\bar{\eta}})$) be the unique solution of FBSDE \eqref{fbsde} associated with $\eta$ (resp. $\bar{\eta}$), then it holds that
\begin{equation}
\begin{aligned}
&\mathbb{E}\left[\sup _{t \leq r \leq T}\left|X^{t,x,\eta}_r-X^{t,x,\bar{\eta}}_r\right|^2|\mathcal{F}_t\right]+\mathbb{E}\left[\sup _{t \leq r \leq T}\left|Y^{t,x,\eta}_r-Y^{t,x,\bar{\eta}}_r\right|^2|\mathcal{F}_t\right]+\mathbb{E} \left[\int_t^T\left|Z^{t,x,\eta}_r-Z^{t,x,\bar{\eta}}_r\right|^2 d r |\mathcal{F}_t\right]\\
&\quad \leq \Gamma \left(|\mathbb{E}[X^{t,\eta}_{T}]-\mathbb{E}[X^{t,\bar{\eta}}_{T}]|^{2}+\int_t^T |\mathbb{E}[X^{t,\eta}_{r}]-\mathbb{E}[X^{t,\bar{\eta}}_{r}]|^{2}+|\mathbb{E}[Y^{t,\eta}_{r}]-\mathbb{E}[Y^{t,\bar{\eta}}_{r}]|^{2} d r\right),
\end{aligned}\label{small stability}
\end{equation} 
where $ \Gamma $ depending on $n,K,T$ and is independent of $t$.\label{lemma}
\end{lemma}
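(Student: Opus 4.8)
The plan is to treat \eqref{fbsde} as a \emph{decoupled} forward–backward system: once $\eta$ (resp.\ $\bar\eta$) is fixed, the expectations $\mathbb{E}[X^{t,\eta}_r]$, $\mathbb{E}[Y^{t,\eta}_r]$ appearing in $b_0$, $f_0$ and $\sigma$ are just deterministic (adapted) input functions, so \eqref{fbsde} is a classical coupled FBSDE with Lipschitz coefficients in which the mean-field part enters only as an exogenous forcing term. Write $\delta X_r := X^{t,x,\eta}_r - X^{t,x,\bar\eta}_r$, $\delta Y_r := Y^{t,x,\eta}_r - Y^{t,x,\bar\eta}_r$, $\delta Z_r := Z^{t,x,\eta}_r - Z^{t,x,\bar\eta}_r$, and set
\begin{equation*}
\Delta_r := |\mathbb{E}[X^{t,\eta}_r] - \mathbb{E}[X^{t,\bar\eta}_r]| + |\mathbb{E}[Y^{t,\eta}_r] - \mathbb{E}[Y^{t,\bar\eta}_r]|, \qquad \Delta_T := |\mathbb{E}[X^{t,\eta}_T] - \mathbb{E}[X^{t,\bar\eta}_T]|.
\end{equation*}
Subtracting the two copies of \eqref{fbsde}, $(\delta X, \delta Y, \delta Z)$ solves a linear FBSDE driven by the bounded coefficients $b_1, b_2, f_1, f_2$, with a terminal condition $\delta Y_T = h_1 \delta X_T + (h_2(\mathbb{E}[X^{t,\eta}_T]) - h_2(\mathbb{E}[X^{t,\bar\eta}_T]))$ and inhomogeneous terms controlled by $K\Delta_r$ (from $b_0, f_0$) and $K(|\delta X_r| + |\delta Y_r| + \Delta_r)$ inside the stochastic integral (from $\sigma$), using (A1).

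The core of the argument is an a priori estimate for this linear FBSDE \emph{whose constant does not blow up with $T$}, and this is exactly where assumptions (A2) is used — the same monotonicity structure that powers the decoupling-field construction. I would run the standard continuation/monotonicity computation: apply Itô's formula to $\delta X_r \cdot \delta Y_r$ on $[t,T]$ (componentwise, tracking the sign conditions on $f_1^i, h_1^i, b_2^i, f_2^{ij}$ prescribed in (A2)(i)/(ii)), take conditional expectation given $\mathcal{F}_t$, and exploit the resulting sign of the boundary and drift terms to absorb the quadratic-in-$(\delta X,\delta Y)$ contributions; the leftover terms are linear in $\Delta_r$ and $\Delta_T$ and are handled by Young's inequality. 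This yields a bound of the form $\mathbb{E}[\sup_r |\delta X_r|^2 \mid \mathcal{F}_t] + \mathbb{E}[\sup_r |\delta Y_r|^2 \mid \mathcal{F}_t] + \mathbb{E}[\int_t^T |\delta Z_r|^2 dr \mid \mathcal{F}_t] \le C(n,K,T)\,(\Delta_T^2 + \int_t^T \Delta_r^2\, dr)$, with $C$ independent of the particular $t$ (it depends on $T$ only through the horizon, not through any iteration count). Standard BSDE estimates (Itô on $|\delta Y_r|^2$, BDG, Gronwall) upgrade the pointwise-in-time bounds to the $\sup$-norm bounds and produce the $\delta Z$ term.

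I expect the main obstacle to be organizing the componentwise monotonicity bookkeeping cleanly: (A2) only constrains the off-diagonal entries $f_2^{ij}$ ($i\neq j$) and lets each coordinate $i$ independently satisfy case (i) or case (ii), so the Itô expansion of $\sum_i \delta X^i\, \delta Y^i$ (note $\delta X$ is scalar here, $n=1$ on the forward side, which actually simplifies this) must be handled so that the contributions with the "wrong" sign are precisely those multiplied by the off-diagonal $f_2^{ij}$ and by the Lipschitz increments of $b_0, f_0, h_2$, all of which are either sign-controlled by (A2) or are of lower order and can be pushed into $\Delta_r, \Delta_T$ and the Gronwall term. A secondary technical point is that since $\sigma$ genuinely depends on $X$ and $Y$ (not just their expectations), the stochastic-integral term in the energy estimate contributes $K^2(|\delta X_r|^2 + |\delta Y_r|^2 + \Delta_r^2)$, which is fine as long as the monotonicity gain is strong enough to dominate it after Young's inequality — this is guaranteed by the assumptions as stated and is the reason the estimate is \emph{conditional} on $\mathcal{F}_t$ rather than requiring any non-degeneracy of $\sigma$. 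Once the a priori estimate is in place, the lemma follows immediately with $\Gamma := C(n,K,T)$.
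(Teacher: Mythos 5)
Your overall framing is right (once $\eta$, $\bar\eta$ are fixed, the two copies of \eqref{fbsde} differ only through exogenous deterministic inputs, and the lemma is a stability estimate for a coupled Lipschitz FBSDE with respect to those inputs), but the step you designate as ``the core of the argument'' does not work. You propose to close the estimate by applying It\^o's formula to $\delta X_r\cdot\delta Y_r$ and using (A2) to make the boundary and drift terms sign-definite, i.e.\ a Peng--Wu/continuation-type duality computation. Assumption (A2) is not that kind of monotonicity condition: it imposes componentwise sign constraints that may differ from one coordinate $i$ to another (case (i) for some $i$, case (ii) for others), it constrains only the \emph{off-diagonal} entries $f_2^{ij}$ (the diagonal $f_2^{ii}$ and $b_1$ are unrestricted in sign), and the cross term $b_2(r)\,\delta Y_r\,\delta X_r$ it produces is sign-indefinite under either case. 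These conditions are cooperativity/comparison-theorem hypotheses designed to bound the slope $\nabla Y/\nabla X$ of the decoupling field through a Riccati-type ODE sandwiched between explicit sub- and supersolutions (as in Ma et al.\ and the authors' prior work), not to make $d(\delta X_r\cdot\delta Y_r)$ absorb the quadratic terms. The paper itself points out (Example 2 of Subsection 3.2) that the continuation-method monotonicity can fail under its hypotheses, so a proof that hinges on that computation cannot be correct here. Without either a genuine duality monotonicity or a uniformly Lipschitz decoupling field, the naive Gronwall loop on the coupled system ($\sup|\delta X|^2\lesssim (T-t)\sup|\delta Y|^2+\dots$ and $\sup|\delta Y|^2\lesssim |\delta X_T|^2+(T-t)\sup|\delta X|^2+\dots$) only closes for $T-t$ small, and your argument gives no mechanism for reaching an arbitrary horizon with a constant $\Gamma(n,K,T)$.

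The missing ingredient is the one the paper actually uses: under (H), (A1)--(A2) the auxiliary FBSDE \eqref{fbsde} admits a decoupling field in $x$ that is uniformly Lipschitz, $|Y^{t,x_1,\eta}_t-Y^{t,x_2,\eta}_t|\le \bar C|x_1-x_2|$ with $\bar C=\bar C(n,K,T)$ independent of $t$ (this is where (A2) enters, via the comparison/Riccati argument of the cited earlier work). With that bound in hand, one partitions $[t,T]$ into finitely many subintervals of length determined by $\bar C$ and runs Delarue's stability argument on each piece, treating the Lipschitz decoupling field as the terminal datum and the discrepancy in the exogenous inputs $\mathbb{E}[X^{t,\eta}]-\mathbb{E}[X^{t,\bar\eta}]$, $\mathbb{E}[Y^{t,\eta}]-\mathbb{E}[Y^{t,\bar\eta}]$ as a perturbation of the coefficients; pasting the finitely many local estimates yields \eqref{small stability} with $\Gamma$ depending only on $n,K,T$. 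Your proposal never establishes (or invokes) this uniform Lipschitz property, so the gap is not merely organizational bookkeeping but the absence of the mechanism that makes the global estimate possible.
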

\begin{proof}
Under assumptions (H) and (A1)-(A2), for any $t\in[s,T]$ and $\eta \in L^{2}(\Omega,\mathcal{F}_{t},\mathbb{P};\mathbb{R})$, the solvability of FBSDE \eqref{fbsde} on $[t,T]$ follows as a straightforward consequence of our previous work \cite{hua2022unified}. Moreover, for any $x_1,x_2\in\mathbb{R},t\in[s,T]$, it holds that 
\begin{equation*}
    |Y^{t,x_1,\eta}_t-Y^{t,x_2,\eta}_t|\leq \bar{C}|x_1-x_2|,
\end{equation*}
where $\bar{C}$ only depends on $n,K,T$ and is independent of $t$. Thus, the uniform stability result \eqref{small stability} follows directly from a combination of arguments from \cite[Theorem 1.3]{delarue2002existence} and \cite[Lemma 2.4]{carmona2013mean}.
\end{proof} 
\begin{lemma}
Under assumptions $(H),\ (A1)$ and $(A3)$, for any $t\in[s,T]$ and $\eta, \bar{\eta}\in L^{2}(\Omega,\mathcal{F}_{t},\mathbb{P};\mathbb{R})$, let $(X^{t,\eta},Y^{t,\eta},Z^{t,\eta})$ (resp. $(X^{t,\bar{\eta}},Y^{t,\bar{\eta}},Z^{t,\bar{\eta}})$) be the unique solution of FBSDE \eqref{mean field fbsde} associated with $\eta$ (resp. $\bar{\eta}$), we have the following estimate:
\begin{equation}
\sup _{t\leq r \leq T}\left|\mathbb{E}[X_r^{t,\eta}]-\mathbb{E}[X_{r}^{t,\bar{\eta}}]\right|^2+ \sup _{t\leq r \leq T}\left|\mathbb{E}[Y_r^{t,\eta}]-\mathbb{E}[Y_{r}^{t,\bar{\eta}}]\right|^2\leq C|\mathbb{E}[\eta]-\mathbb{E}[\bar{\eta}]|^{2},\label{expectation estimate}
\end{equation}
where $C$ only depends on $n,K,T$, and is independent of $t$.\label{small bound}
\end{lemma}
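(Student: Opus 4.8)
The plan is to derive a closed, self-contained system of ordinary integral (in fact, linear ODE-type) relations for the deterministic quantities $\phi_r := \mathbb{E}[X_r^{t,\eta}] - \mathbb{E}[X_r^{t,\bar\eta}]$ and $\psi_r := \mathbb{E}[Y_r^{t,\eta}] - \mathbb{E}[Y_r^{t,\bar\eta}]$, and then to run the continuation/a priori-estimate argument on this system, exactly exploiting the sign conditions in (A3). Taking expectations in \eqref{mean field fbsde} kills the stochastic integrals, so $\phi$ and $\psi$ solve
\begin{equation*}
\left\{\begin{aligned}
\phi_r &= \phi_t + \int_t^r \bigl[\, b_1(u)\phi_u + b_2(u)\psi_u + \bigl(b_0(u,\mathbb{E}[X_u^{t,\eta}],\mathbb{E}[Y_u^{t,\eta}]) - b_0(u,\mathbb{E}[X_u^{t,\bar\eta}],\mathbb{E}[Y_u^{t,\bar\eta}])\bigr)\,\bigr]\,du,\\
\psi_r &= h_1\phi_T + \bigl(h_2(\mathbb{E}[X_T^{t,\eta}]) - h_2(\mathbb{E}[X_T^{t,\bar\eta}])\bigr) + \int_r^T \bigl[\, f_1(u)\phi_u + f_2(u)\psi_u + \bigl(f_0(u,\mathbb{E}[X_u^{t,\eta}],\mathbb{E}[Y_u^{t,\eta}]) - f_0(u,\mathbb{E}[X_u^{t,\bar\eta}],\mathbb{E}[Y_u^{t,\bar\eta}])\bigr)\,\bigr]\,du,
\end{aligned}\right.
\end{equation*}
with $\phi_t = \mathbb{E}[\eta] - \mathbb{E}[\bar\eta]$. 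Using the difference quotients defined in \eqref{quadratic notation}, the finite-difference telescoping identity lets me write the $b_0$-increment as $b_3(u,\cdot,\cdot)\,\phi_u + b_4(u,\cdot,\cdot)\cdot\psi_u$ (evaluated at the appropriate intermediate law points, all of which are bounded by $K$ by (A1)), and similarly the $f_0$- and $h_2$-increments; note these difference quotients are themselves deterministic here because the arguments $\mathbb{E}[X^{t,\eta}_\cdot]$, etc., are deterministic, so the $\mathbb{E}[\cdot]$ wrapping in (A3) is vacuous along this flow. Thus $(\phi,\psi)$ is the solution of a \emph{linear} FBSDE-without-noise (a linear two-point boundary value problem) whose coefficients $\tilde b_1 = b_1$, $\tilde b_2 = b_2 + b_4$, $\tilde f_1 = f_1 + f_3$, $\tilde f_2 = f_2 + f_4$, $\tilde h_1 = h_1 + h_2'$ satisfy the componentwise sign pattern of (A3).

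Next I would set up the energy/monotonicity functional used in the continuation method — for each index $i$ in case (A3)(i) consider $\Lambda^i_r := \phi^i_r$ paired appropriately, more precisely study $\frac{d}{dr}(\phi^i_r \psi^i_r)$ or an $L^2$-type quantity $\int_t^T$ — and integrate by parts. The product $\phi_T \cdot (\tilde h_1 \phi_T + \text{const})$ at the terminal time, together with the running terms $\phi^i_u(\tilde f_1^i \phi^i_u)$, $\phi^i_u \tilde f_2^{ii}\psi^i_u$ (wait — the diagonal $f_2^{ii}$ is not sign-constrained, so I must keep it together with $f_2^{ii}\psi^i_u\psi^i_u$ which has a definite sign only after a Gronwall absorption), and the cross terms $\psi^i_u b_2^i \phi^i_u$ versus $\psi^i_u(\sum_{j\ne i} f_2^{ij}\phi\text{-type})$, should combine so that the "bad" quadratic cross-terms cancel and what survives has a sign dictated by (A3). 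This is the standard Peng–Wu / Hu–Peng bookkeeping, here considerably simplified because there is no Brownian term and no $Z$; concretely I expect to obtain an a priori bound $\sup_{t\le r\le T}(|\phi_r|^2 + |\psi_r|^2) \le C\,|\phi_t|^2$ with $C = C(n,K,T)$ independent of $t$, by closing a Gronwall inequality after the sign-constrained terms have been discarded (they only help) and the remaining terms (coming from the unconstrained diagonal $f_2^{ii}$, the Lipschitz constants $K$, etc.) have been absorbed.

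Having the a priori estimate, existence/uniqueness of $(\phi,\psi)$ is not an issue — it is \emph{given}, since under (H) the FBSDEs \eqref{mean field fbsde} for $\eta$ and $\bar\eta$ are uniquely solvable and $\phi,\psi$ are just differences of expectations of those solutions — so the lemma is really only the a priori bound. Therefore the proof reduces to: (1) write the linear BVP for $(\phi,\psi)$; (2) identify the effective coefficients and check they inherit the (A3) sign pattern; (3) run the integration-by-parts / Gronwall argument to get $\sup_r(|\phi_r|^2+|\psi_r|^2)\le C|\phi_t|^2$. The main obstacle is step (3): getting the constant $C$ to be \emph{uniform in $t$} and handling the unconstrained diagonal entries $f_2^{ii}$ and $b_1$ — these force me to do a Gronwall absorption rather than a clean sign argument, and I need to make sure the Gronwall constant depends only on $K,T,n$ and not on the particular flow or on $t$. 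I would handle this by first bounding $\sup_r|\psi_r|$ in terms of $|\phi_T|$ and $\sup_r|\phi_r|$ via the backward equation (linear, Lipschitz, Gronwall), then feeding that into the forward equation for $\phi$ to bound $\sup_r|\phi_r|$ in terms of $|\phi_t|$ plus a small multiple of $\sup_r|\psi_r|$, and iterating — with the (A3) signs ensuring the coupling constant in the relevant direction is $\le 1$ so the iteration closes on an arbitrary interval $[t,T]$ rather than only a short one.
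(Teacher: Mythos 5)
Your reduction to the deterministic two-point boundary value problem for $\phi_r=\mathbb{E}[X^{t,\eta}_r]-\mathbb{E}[X^{t,\bar\eta}_r]$ and $\psi_r=\mathbb{E}[Y^{t,\eta}_r]-\mathbb{E}[Y^{t,\bar\eta}_r]$, with effective coefficients $b_1+\mathbb{E}[b_3]$, $b_2+\mathbb{E}[b_4]$, $f_1+\mathbb{E}[f_3]$, $f_2+\mathbb{E}[f_4]$, $h_1+\mathbb{E}[h_2]$, is exactly the paper's starting point (note, however, that the expectations in (A3) are not vacuous: $b_0,f_0,h_2$ are allowed to depend on $\omega$, so the difference quotients are random even when evaluated along the deterministic flow). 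The genuine gap is in your step (3). The conditions (A3) are componentwise sign conditions of cooperative/comparison type, not the bilinear monotonicity of Peng--Wu/Hu--Peng; the paper's own Example 2 exhibits coefficients satisfying (A2)--(A3) for which the continuation-method monotonicity functional has no sign. So the integration-by-parts pairing of $\phi$ with $\psi$ (which is in any case dimensionally awkward here, since $\phi\in\mathbb{R}$ and $\psi\in\mathbb{R}^n$) will not produce cancellation of the bad cross terms, and your fallback --- alternating forward and backward Gronwall bounds and hoping the coupling constant is $\le 1$ --- only closes on a sufficiently short interval; nothing in (A3) makes it close on $[t,T]$ for arbitrary $T$, and you give no argument that it does.

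The missing idea is a uniform \emph{slope} bound at the pasting points. The paper fixes $\delta$ depending only on $n,K,T$ so that the Gronwall argument closes on any subinterval of length $\delta$, and then, on $[T-\delta,T]$, normalizes the differences by $\mathbb{E}[X^{t,\eta}_{T-\delta}]-\mathbb{E}[X^{t,\bar\eta}_{T-\delta}]$, shows $\mathbb{E}[\nabla X_u]$ stays strictly positive, and studies the ratio $\tilde Y_u=\mathbb{E}[\nabla Y_u]\,\mathbb{E}[\nabla X_u]^{-1}$, which solves a matrix Riccati-type ODE. Here (A3) enters precisely as a comparison-theorem hypothesis: the off-diagonal signs $f_2^{ij}+\mathbb{E}[f_4^{ij}]\ge 0$ make the vector field quasi-monotone, the signs of $f_1^i+\mathbb{E}[f_3^i]$ and $h_1^i+\mathbb{E}[h_2^i]$ give $\tilde Y\ge 0$, and $b_2^i+\mathbb{E}[b_4^i]\le 0$ then tames the quadratic term, sandwiching $\tilde Y$ between $0$ and an explicitly bounded supersolution. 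This yields $|\psi_u|\le M|\phi_u|$ with $M=M(n,K,T)$ at $u=T-\delta$, which is what allows the short-interval estimates to be concatenated finitely many times with a constant independent of $t$. Without this (or an equivalent uniform Lipschitz bound on the decoupling field in the measure variable), your argument does not reach an arbitrary horizon.
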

\begin{proof}
    Let $M=2nK(T+1)e^{(2n+2)KT}$ and $\delta=\frac{1}{2M(3M^2+4)}$. We will only give the proof under assumptions (H), (A1) and (A3)(i), since the proof is similar under assumptions (H), (A1) and (A3)(ii). The proof will be divided into several steps.

    \textbf{Step 1: }
    For any $t\in[s,T]$ and $\eta\in L^{2}(\Omega,\mathcal{F}_{t},\mathbb{P};\mathbb{R})$, taking expectation in \eqref{mean field fbsde} implies that 
    \begin{equation*}
        \left\{
        \begin{aligned}
        \mathbb{E}[X^{t,\eta}_{r}] &= \mathbb{E}[\eta]+ \int_{t}^{r}[b_{1}(u)\mathbb{E}[X^{t,\eta}_{u}]+b_{2}(u)\mathbb{E}[Y^{t,\eta}_{u}]+b_{0}(u,\mathbb{E}[X^{t,\eta}_{u}],\mathbb{E}[Y^{t,\eta}_{u}])]du,\\
        \mathbb{E}[Y^{t,\eta}_{r}] &= h_{1}\mathbb{E}[X^{t,\eta}_T]+h_{2}(\mathbb{E}[X^{t,\eta}_T])+\int_r^T\left[f_{1}(u)\mathbb{E}[X^{t,\eta}_u]+f_{2}(u)\mathbb{E}[Y^{t,\eta}_u]+f_{0}\left(u,\mathbb{E}[X^{t,\eta}_u],\mathbb{E}[Y^{t,\eta}_u]\right)\right]d u.
        \end{aligned}
        \right.
    \end{equation*}
    Therefore, we have 
    \begin{equation*}
        \left\{
        \begin{aligned}
        \mathbb{E}[X^{t,\eta}_{r}]-\mathbb{E}[X^{t,\bar{\eta}}_{r}] &= \mathbb{E}[\eta]-\mathbb{E}[\bar{\eta}]+ \int_{t}^{r}[b_{1}(u)\left(\mathbb{E}[X^{t,\eta}_{u}]-\mathbb{E}[X^{t,\bar{\eta}}_{u}]\right)+b_{2}(u)\left(\mathbb{E}[Y^{t,\eta}_{u}]-\mathbb{E}[Y^{t,\bar{\eta}}_{u}]\right)]du\\
        &\qquad\qquad +\int_{t}^{r}[b_{0}(u,\mathbb{E}[X^{t,\eta}_{u}],\mathbb{E}[Y^{t,\eta}_{u}])-b_{0}(u,\mathbb{E}[X^{t,\bar{\eta}}_{u}],\mathbb{E}[Y^{t,\bar{\eta}}_{u}])]du,\\
        \mathbb{E}[Y^{t,\eta}_{r}]-\mathbb{E}[Y^{t,\bar{\eta}}_{r}] &= h_{1}\left(\mathbb{E}[X^{t,\eta}_T]-\mathbb{E}[X^{t,\bar{\eta}}_T]\right)+h_{2}(\mathbb{E}[X^{t,\eta}_T])-h_{2}(\mathbb{E}[X^{t,\bar{\eta}}_T])\\
        &\qquad\qquad+\int_r^T\left[f_{1}(u)\left(\mathbb{E}[X^{t,\eta}_u]-\mathbb{E}[X^{t,\bar{\eta}}_u]\right)+f_{2}(u)\left(\mathbb{E}[Y^{t,\eta}_u]-\mathbb{E}[Y^{t,\bar{\eta}}_u]\right)\right]du\\
        &\qquad\qquad+\int_r^T\left[f_{0}\left(u,\mathbb{E}[X^{t,\eta}_u],\mathbb{E}[Y^{t,\eta}_u]\right)-f_{0}\left(u,\mathbb{E}[X^{t,\bar{\eta}}_u],\mathbb{E}[Y^{t,\bar{\eta}}_u]\right)\right]du.
        \end{aligned}
        \right.
    \end{equation*}
    Thus, it holds
    \begin{align*}
        &|\mathbb{E}[X^{t,\eta}_{r}]-\mathbb{E}[X^{t,\bar{\eta}}_{r}]|^2 \\
        &\leq |\mathbb{E}[\eta]-\mathbb{E}[\bar{\eta}]|^2+ 2\int_{t}^{r}|b_{1}(u)|\left|\mathbb{E}[X^{t,\eta}_{u}]-\mathbb{E}[X^{t,\bar{\eta}}_{u}]\right|^2du\\
        &\qquad\qquad+ 2\int_{t}^{r}|b_{2}(u)|\left|\mathbb{E}[X^{t,\eta}_{u}]-\mathbb{E}[X^{t,\bar{\eta}}_{u}]\right|\left|\mathbb{E}[Y^{t,\eta}_{u}]-\mathbb{E}[Y^{t,\bar{\eta}}_{u}]\right|du\\
        &\qquad\qquad +2\int_{t}^{r}\left|\mathbb{E}[X^{t,\eta}_{u}]-\mathbb{E}[X^{t,\bar{\eta}}_{u}]\right|\left|b_{0}(u,\mathbb{E}[X^{t,\eta}_{u}],\mathbb{E}[Y^{t,\eta}_{u}])-b_{0}(u,\mathbb{E}[X^{t,\bar{\eta}}_{u}],\mathbb{E}[Y^{t,\bar{\eta}}_{u}])\right|du\\
        &\leq |\mathbb{E}[\eta]-\mathbb{E}[\bar{\eta}]|^2+ 2K\int_{t}^{r}\left|\mathbb{E}[X^{t,\eta}_{u}]-\mathbb{E}[X^{t,\bar{\eta}}_{u}]\right|^2du\\
        &\qquad\qquad+ K\int_{t}^{r}\left(\left|\mathbb{E}[X^{t,\eta}_{u}]-\mathbb{E}[X^{t,\bar{\eta}}_{u}]\right|^2+\left|\mathbb{E}[Y^{t,\eta}_{u}]-\mathbb{E}[Y^{t,\bar{\eta}}_{u}]\right|^2\right)du\\
        &\qquad\qquad +2K\int_{t}^{r}\left|\mathbb{E}[X^{t,\eta}_{u}]-\mathbb{E}[X^{t,\bar{\eta}}_{u}]\right|^2du+K\int_{t}^{r}\left(\left|\mathbb{E}[X^{t,\eta}_{u}]-\mathbb{E}[X^{t,\bar{\eta}}_{u}]\right|^2+\left|\mathbb{E}[Y^{t,\eta}_{u}]-\mathbb{E}[Y^{t,\bar{\eta}}_{u}]\right|^2\right)du\\
        &\leq |\mathbb{E}[\eta]-\mathbb{E}[\bar{\eta}]|^2+ 6K\int_{t}^{r}\left|\mathbb{E}[X^{t,\eta}_{u}]-\mathbb{E}[X^{t,\bar{\eta}}_{u}]\right|^2du+2K\int_{t}^{r}\left|\mathbb{E}[Y^{t,\eta}_{u}]-\mathbb{E}[Y^{t,\bar{\eta}}_{u}]\right|^2du\\
        &\leq |\mathbb{E}[\eta]-\mathbb{E}[\bar{\eta}]|^2+ 3M\int_{t}^{r}\left|\mathbb{E}[X^{t,\eta}_{u}]-\mathbb{E}[X^{t,\bar{\eta}}_{u}]\right|^2du+M\int_{t}^{r}\left|\mathbb{E}[Y^{t,\eta}_{u}]-\mathbb{E}[Y^{t,\bar{\eta}}_{u}]\right|^2du.
    \end{align*}
    Hence, we get 
    \begin{equation}\label{eq:estimateX}
    \begin{aligned}
        &\sup_{t\leq r\leq T}|\mathbb{E}[X^{t,\eta}_{r}]-\mathbb{E}[X^{t,\bar{\eta}}_{r}]|^2 \\
        &\leq |\mathbb{E}[\eta]-\mathbb{E}[\bar{\eta}]|^2+ 3M(T-t)\sup_{t\leq r\leq T}\left|\mathbb{E}[X^{t,\eta}_{r}]-\mathbb{E}[X^{t,\bar{\eta}}_{r}]\right|^2+M(T-t)\sup_{t\leq r\leq T}\left|\mathbb{E}[Y^{t,\eta}_{r}]-\mathbb{E}[Y^{t,\bar{\eta}}_{r}]\right|^2.
    \end{aligned}
    \end{equation}
    On the other hand, it holds that
    \begin{align*}
        &\left|\mathbb{E}[Y^{t,\eta}_{r}]-\mathbb{E}[Y^{t,\bar{\eta}}_{r}]\right|^2 \\
        &\leq  2|h_{1}|^2\left|\mathbb{E}[X^{t,\eta}_T]-\mathbb{E}[X^{t,\bar{\eta}}_T]\right|^2+2\left|h_{2}(\mathbb{E}[X^{t,\eta}_T])-h_{2}(\mathbb{E}[X^{t,\bar{\eta}}_T])\right|^2\\
        &\qquad\qquad+2\int_r^T|f_{1}(u)|\left|\mathbb{E}[X^{t,\eta}_u]-\mathbb{E}[X^{t,\bar{\eta}}_u]\right|\left|\mathbb{E}[Y^{t,\eta}_u]-\mathbb{E}[Y^{t,\bar{\eta}}_u]\right|du\\
        &\qquad\qquad+2\int_r^T|f_{2}(u)|\left|\mathbb{E}[Y^{t,\eta}_u]-\mathbb{E}[Y^{t,\bar{\eta}}_u]\right|^2du\\
        &\qquad\qquad+2\int_r^T\left|\mathbb{E}[Y^{t,\eta}_u]-\mathbb{E}[Y^{t,\bar{\eta}}_u]\right|\left|f_{0}\left(u,\mathbb{E}[X^{t,\eta}_u],\mathbb{E}[Y^{t,\eta}_u]\right)-f_{0}\left(u,\mathbb{E}[X^{t,\bar{\eta}}_u],\mathbb{E}[Y^{t,\bar{\eta}}_u]\right)\right|du\\
        &\leq  4K^2\left|\mathbb{E}[X^{t,\eta}_T]-\mathbb{E}[X^{t,\bar{\eta}}_T]\right|^2+2K\int_r^T\left|\mathbb{E}[Y^{t,\eta}_u]-\mathbb{E}[Y^{t,\bar{\eta}}_u]\right|^2du\\
        &\qquad\qquad+K\int_r^T\left(\left|\mathbb{E}[X^{t,\eta}_u]-\mathbb{E}[X^{t,\bar{\eta}}_u]\right|^2+\left|\mathbb{E}[Y^{t,\eta}_u]-\mathbb{E}[Y^{t,\bar{\eta}}_u]\right|^2\right)du\\
        &\qquad\qquad+2K\int_r^T\left|\mathbb{E}[Y^{t,\eta}_u]-\mathbb{E}[Y^{t,\bar{\eta}}_u]\right|^2du+K\int_r^T\left(\left|\mathbb{E}[X^{t,\eta}_u]-\mathbb{E}[X^{t,\bar{\eta}}_u]\right|^2+\left|\mathbb{E}[Y^{t,\eta}_u]-\mathbb{E}[Y^{t,\bar{\eta}}_u]\right|^2\right)du\\
        &\leq 4K^2\left|\mathbb{E}[X^{t,\eta}_T]-\mathbb{E}[X^{t,\bar{\eta}}_T]\right|^2+6K\int_r^T\left|\mathbb{E}[Y^{t,\eta}_u]-\mathbb{E}[Y^{t,\bar{\eta}}_u]\right|^2du+2K\int_r^T\left|\mathbb{E}[X^{t,\eta}_u]-\mathbb{E}[X^{t,\bar{\eta}}_u]\right|^2du\\
        &\leq M^2\left|\mathbb{E}[X^{t,\eta}_T]-\mathbb{E}[X^{t,\bar{\eta}}_T]\right|^2+3M\int_r^T\left|\mathbb{E}[Y^{t,\eta}_u]-\mathbb{E}[Y^{t,\bar{\eta}}_u]\right|^2du+M\int_r^T\left|\mathbb{E}[X^{t,\eta}_u]-\mathbb{E}[X^{t,\bar{\eta}}_u]\right|^2du.
    \end{align*}
    Thus, we have
    \begin{equation}\label{eq:estimateY}
        \begin{aligned}
            &\sup_{t\leq r\leq T}\left|\mathbb{E}[Y^{t,\eta}_{r}]-\mathbb{E}[Y^{t,\bar{\eta}}_{r}]\right|^2 \\
            &\leq M^2\left|\mathbb{E}[X^{t,\eta}_T]-\mathbb{E}[X^{t,\bar{\eta}}_T]\right|^2+3M(T-t)\sup_{t\leq r\leq T}\left|\mathbb{E}[Y^{t,\eta}_r]-\mathbb{E}[Y^{t,\bar{\eta}}_r]\right|^2+M(T-t)\sup_{t\leq r\leq T}\left|\mathbb{E}[X^{t,\eta}_r]-\mathbb{E}[X^{t,\bar{\eta}}_r]\right|^2.
        \end{aligned}
    \end{equation}
    Combining \eqref{eq:estimateX} and \eqref{eq:estimateY} implies
    \begin{equation}\label{eq:estimateXY}
    \begin{aligned}
        &\sup_{t\leq r\leq T}\left|\mathbb{E}[X^{t,\eta}_r]-\mathbb{E}[X^{t,\bar{\eta}}_r]\right|^2+\sup_{t\leq r\leq T}\left|\mathbb{E}[Y^{t,\eta}_{r}]-\mathbb{E}[Y^{t,\bar{\eta}}_{r}]\right|^2 \\
            &\leq |\mathbb{E}[\eta]-\mathbb{E}[\bar{\eta}]|^2+M^2\left|\mathbb{E}[X^{t,\eta}_T]-\mathbb{E}[X^{t,\bar{\eta}}_T]\right|^2\\
            &\qquad\qquad+4M(T-t)\sup_{t\leq r\leq T}\left|\mathbb{E}[Y^{t,\eta}_r]-\mathbb{E}[Y^{t,\bar{\eta}}_r]\right|^2+4M(T-t)\sup_{t\leq r\leq T}\left|\mathbb{E}[X^{t,\eta}_r]-\mathbb{E}[X^{t,\bar{\eta}}_r]\right|^2\\
            &\leq \left(M^2+1\right)|\mathbb{E}[\eta]-\mathbb{E}[\bar{\eta}]|^2+(M^2+4)M(T-t)\sup_{t\leq r\leq T}\left|\mathbb{E}[Y^{t,\eta}_r]-\mathbb{E}[Y^{t,\bar{\eta}}_r]\right|^2\\
            &\qquad\qquad+(3M^2+4)M(T-t)\sup_{t\leq r\leq T}\left|\mathbb{E}[X^{t,\eta}_r]-\mathbb{E}[X^{t,\bar{\eta}}_r]\right|^2.
    \end{aligned}
    \end{equation}
    If $s\geq T-\delta$, it follows immediately from \eqref{eq:estimateXY} that for any $t\in[s,T]$,
    \begin{equation}\label{eq:smallLip}
        \sup_{t\leq r\leq T}\left|\mathbb{E}[X^{t,\eta}_r]-\mathbb{E}[X^{t,\bar{\eta}}_r]\right|^2+\sup_{t\leq r\leq T}\left|\mathbb{E}[Y^{t,\eta}_{r}]-\mathbb{E}[Y^{t,\bar{\eta}}_{r}]\right|^2 \leq 2\left(M^2+1\right)|\mathbb{E}[\eta]-\mathbb{E}[\bar{\eta}]|^2,
    \end{equation}
    which completes the proof.

    \textbf{Step 2:} If $s< T-\delta$, for any $t\in[s,T-\delta)$, it follows again from \eqref{eq:estimateXY} that for any $u\in[T-\delta,T]$,
    \begin{equation}\label{eq:estimateDelta}
        \sup_{u\leq r\leq T}\left|\mathbb{E}[X^{t,\eta}_r]-\mathbb{E}[X^{t,\bar{\eta}}_r]\right|^2+\sup_{u\leq r\leq T}\left|\mathbb{E}[Y^{t,\eta}_{r}]-\mathbb{E}[Y^{t,\bar{\eta}}_{r}]\right|^2 \leq 2\left(M^2+1\right)|\mathbb{E}[X^{t,\eta}_u]-\mathbb{E}[X^{t,\bar{\eta}}_u]|^2.
    \end{equation}
    Now we assume $\mathbb{E}[X_{T-\delta}^{t,\eta}]\neq \mathbb{E}[X_{T-\delta}^{t,\bar{\eta}}]$  and denote for $u\in[T-\delta,T]$,
    \begin{equation*}
     \nabla X_u=\frac{X_{u}^{t,\eta}-X_{u}^{t,\bar{\eta}}}{\mathbb{E}[X_{T-\delta}^{t,\eta}]-\mathbb{E}[X_{T-\delta}^{t,\bar{\eta}}]}, \quad   \nabla Y_u=\frac{Y_{u}^{t,\eta}-Y_{u}^{t,\bar{\eta}}}{\mathbb{E}[X_{T-\delta}^{t,\eta}]-\mathbb{E}[X_{T-\delta}^{t,\bar{\eta}}]}, \quad   \nabla Z_u=\frac{Z_{u}^{t,\eta}-Z_{u}^{t,\bar{\eta}}}{\mathbb{E}[X_{T-\delta}^{t,\eta}]-\mathbb{E}[X_{T-\delta}^{t,\bar{\eta}}]},
     \end{equation*}
     and 
     \begin{equation*}
       \nabla \sigma_{u} =\frac{\sigma\left(u, X_u^{t,\eta}, Y_u^{t,\bar{\eta}},\mathbb{E}[X_u^{t,\eta}],\mathbb{E}[Y_u^{t,\eta}]\right)-\sigma\left(u, X_u^{t,\eta}, Y_u^{t,\bar{\eta}},\mathbb{E}[X_u^{t,\bar{\eta}}],\mathbb{E}[Y_u^{t,\bar{\eta}}]\right)}{\mathbb{E}[X_{T-\delta}^{t,\eta}]-\mathbb{E}[X_{T-\delta}^{t,\bar{\eta}}]}. 
     \end{equation*}
    One can easily check that  $(\nabla X_{u},\nabla Y_{u}, \nabla Z_{u})_{T-\delta \leq u \leq T}$ satisfies the following variational FBSDE:
    \begin{equation}
        \left\{ \begin{array}{l}
            \nabla X_{u}=\frac{X_{T-\delta}^{t,\eta}-X_{T-\delta}^{t,\bar{\eta}}}{\mathbb{E}[X_{T-\delta}^{t,\eta}]-\mathbb{E}[X_{T-\delta}^{t,\bar{\eta}}]}+ \int_{T-\delta}^{u}\left(b_1\nabla X_r+b_2  \nabla Y_r+b_{3} \mathbb{E}[\nabla X_{r}]+b_4\mathbb{E}[\nabla Y_r]\right)dr+\int_{T-\delta}^{u}\nabla\sigma_{r}dW_{r},\\
             \nabla Y_{u}^{i} = h_{1}^{i}\nabla X_{T}+h_{2}^{i}\mathbb{E}[\nabla X_{T}] + \int_{u}^{T} \left(f_{1}^{i}\nabla X_{r}+f_{2}^{i}\nabla Y_{r}+f_3^{i}\mathbb{E}[\nabla X_{r}]+f_{4}^{i}\mathbb{E}[\nabla Y_{r}]\right)dr\\
             \qquad\qquad\qquad\qquad-\int_{u}^{T}\nabla Z_{r}dW_{r},\qquad\qquad\qquad i=1,2,\cdots,n,
        \end{array} \right. \label{multidimension variation FBSDE}
    \end{equation}
    where 
    \begin{align*}
        &h_{2}^{i} \triangleq h_{2}^{i}(\mathbb{E}[X_{T}^{t,\eta}],\mathbb{E}[X_{T}^{t,\bar{\eta}}]),~~b_{3}(r) \triangleq b_{3}(r,(\mathbb{E}[X_{r}^{t,\eta}],\mathbb{E}[Y_{r}^{t,\eta}]),(\mathbb{E}[X_{r}^{t,\bar{\eta}}],\mathbb{E}[Y_{r}^{t,\bar{\eta}}])),\\
        &b_{4}(r) \triangleq b_{4}(r,(\mathbb{E}[X_{r}^{t,\eta}],\mathbb{E}[Y_{r}^{t,\eta}]),(\mathbb{E}[X_{r}^{t,\bar{\eta}}],\mathbb{E}[Y_{r}^{t,\bar{\eta}}])),~~f_{3}(r) \triangleq f_{3}(r,(\mathbb{E}[X_{r}^{t,\eta}],\mathbb{E}[Y_{r}^{t,\eta}]),(\mathbb{E}[X_{r}^{t,\bar{\eta}}],\mathbb{E}[Y_{r}^{t,\bar{\eta}}])),\\
        &f_{4}(r) \triangleq f_{4}(r,(\mathbb{E}[X_{r}^{t,\eta}],\mathbb{E}[Y_{r}^{t,\eta}]),(\mathbb{E}[X_{r}^{t,\bar{\eta}}],\mathbb{E}[Y_{r}^{t,\bar{\eta}}])).
    \end{align*}
    Taking expectation in \eqref{multidimension variation FBSDE} yields
    \begin{equation}
        \left\{ \begin{array}{l}
           \mathbb{E}[\nabla X_{u}] =1+\int_{T-\delta}^{u} \left((b_1 +\mathbb{E}[b_{3}])\mathbb{E}[\nabla X_{r}]+(b_2+\mathbb{E}[b_{4}]) \mathbb{E}[\nabla Y_{r}]\right)dr, \\
             \mathbb{E}[\nabla Y_{u}^{i}] = (h_{1}^{i}+\mathbb{E}[h_{2}^{i}])\mathbb{E}[\nabla X_{T}]+\int_{u}^{T}\left(  (f_{1}^{i}+\mathbb{E}[f_{3}^{i}])\mathbb{E}[\nabla X_{r}]+(f_{2}^{i}+\mathbb{E}[f_{4}^{i}])\mathbb{E}[\nabla Y_{r}]\right)dr,~i=1,2\cdots,n.
        \end{array}\right.\label{multidimension variation}
    \end{equation}
    We will now show that $\mathbb{E}[\nabla X_u]$ remains positive on the time interval $[T-\delta, T]$. To this end, let $\tau \in[T-\delta, T]$ be a stopping time such that $\mathbb{E}[\nabla X_u]$ is positive on $[T-\delta, \tau]$.
    Denote
    \begin{equation*}
    \tilde{Y}_u^{i}=\mathbb{E}[\nabla Y_u^{i}]\left[\mathbb{E}[\nabla X_u]\right]^{-1}, i=1,2,\cdots,n, \quad u \in[T-\delta, \tau].
    \end{equation*}
    Noting that on $[T-\delta,\tau]$,
    \begin{equation}
    \begin{aligned}
      d(\mathbb{E}[\nabla X_{r}])^{-1} &= -(\mathbb{E}[\nabla X_{r}])^{-2}\left[(b_1 +\mathbb{E}[b_{3}])\mathbb{E}[\nabla X_{r}]+(b_2+\mathbb{E}[b_{4}])\mathbb{E}[\nabla Y_{r}]\right]dr\\
      & = -(\mathbb{E}[\nabla X_{r}])^{-1}\left[b_{1}+\mathbb{E}[b_{3}]+(b_{2}+ \mathbb{E}[ b_{4}])\tilde{Y}_{r}\right]dr,
    \end{aligned}
      \end{equation}
    one can get
    \begin{equation}
        \mathbb{E}[\nabla X_{u}]^{-1}  = \exp\left(\int_{T-\delta}^{u}(-b_{1}-\mathbb{E}[b_{3}]-(b_{2}+\mathbb{E}[b_{4}]) \tilde{Y}_{r})dr\right),~~~~u\in[T-\delta,\tau].
        \end{equation}
     According to \eqref{eq:estimateDelta}, $\tilde{Y}_{s}$ is bounded on $[T-\delta,T]$ and the Lipschitz continuity of $b$ impies that $b_1,b_2,b_3,b_4$ are uniformly bounded. Therefore, $\mathbb{E}[\nabla X^{-1}]$ is bounded on $\left[T-\delta, \tau\right]$ by a positive constant that does not depend on $\tau$. This implies that $\nabla X $ can never reach $0$, i.e., we can choose $\tau=T$. Thefore, it holds on $[T-\delta,T]$ that for $i=1,2,\cdots,n$,
    \begin{equation*}
    \begin{aligned}
    & d \mathbb{E}\left[\nabla Y_r^{i}\right] \mathbb{E}\left[\nabla X_r\right]^{-1} \\
    & =\left(\left(-f_1^{i}-\mathbb{E}[f_3^{i}]\right)-\left(f_2^{i}+\mathbb{E}[f_4^{i}]\right)\tilde{Y}_r-\tilde{Y}_r^{i}\left(\left(b_1+\mathbb{E}[b_3]\right)+\left(b_2+\mathbb{E}[b_4]\right)\tilde{Y}_r\right)\right) d r \\
    & =-\left[\left(f_1^{i}+\mathbb{E}[f_3^{i}]\right)+\left(b_1+\mathbb{E}[b_3]\right)\tilde{Y}_{s}^{i}+\left(f_2^{i}+\mathbb{E}[f_4^{i}]\right) \tilde{Y}_{r}+\left(b_2+\mathbb{E}[b_4]\right) \tilde{Y}_r\tilde{Y}^{i}_{r}\right] d r.
    \end{aligned}
    \end{equation*}
    For each $i=1,2,\cdots,n$ and $r\in[T-\delta,T],~y\in\mathbb{R}^n$, we introduce the following two functions:
    \begin{equation*}
        \begin{aligned}
         H^{i}(r,y) &= \left(f_1^{i}+\mathbb{E}[f_3^{i}]\right)+\left(b_1+\mathbb{E}[b_3]\right)y^{i}+\left(f_2^{i}+\mathbb{E}[f_4^{i}]\right)y+\left(b_2+\mathbb{E}[b_4]\right) \tilde{Y}_ry^{i}\\
         &=\left(f_1^{i}+\mathbb{E}[f_3^{i}]\right)+\left(b_1+\mathbb{E}[b_3]\right)y^{i}+\sum_{j=1}^{n}(f_{2}^{ij}+\mathbb{E}[f_{4}^{ij}])y^{j}+\sum_{j=1}^{n}(b_{2}^{j}+\mathbb{E}[b_{4}^{j}])\tilde{Y}^{j}_ry^i,\\
         \underline{H}^{i}(r,y)&=\left(b_1+\mathbb{E}[b_3]\right)y^{i}+\sum_{j=1}^{n}(f_{2}^{ij}+\mathbb{E}[f_{4}^{ij}])y^{j}+\sum_{j=1}^{n}(b_{2}^{j}+\mathbb{E}[b_{4}^{j}])\tilde{Y}^{j}_ry^i.
        \end{aligned}
        \end{equation*}
        Noting that $\tilde{Y}$ is uniformly bounded on $[T-\delta,T]$, one can easily check that $H$ and $\underline{H}$ are Lipschitz functions on $[T-\delta,T]$. Moreover, $\tilde{Y}$ is the unique solution of the following ODE on $[T-\delta, T]$,
        \begin{equation*}
            \tilde{Y}_u=h_{1}+\mathbb{E}[h_{2}]+\int_u^TH(r,\tilde{Y}_r)dr,
        \end{equation*}
        and the following ODE
        \begin{equation*}
             \underline{\mathbf{y}}_{u} = \int_{u}^{T}\underline{H}(r,\underline{\mathbf{y}}_{r})dr
            \end{equation*}
        admits a unique solution on $[T-\delta,T]$, satisfying $\underline{\mathbf{y}}_u=0$ for all $u\in[T-\delta,T]$. Recalling assumption (A3)(i), it follows from standard comparison theorem for ODEs that for all $u\in[T-\delta,T]$
        \begin{equation}\label{eq:lowerbound}
            0=\underline{\mathbf{y}}_u\leq \tilde{Y}_u.
        \end{equation}
        Similarly, for each $i=1,2,\cdots,n$ and $r\in[T-\delta,T],~y\in\mathbb{R}^n$, we introduce the following two functions:
        \begin{equation*}
            \begin{aligned}
             \tilde{H}^{i}(r,y) &= \left(f_1^{i}+\mathbb{E}[f_3^{i}]\right)+\left(b_1+\mathbb{E}[b_3]\right)y^{i}+\left(f_2^{i}+\mathbb{E}[f_4^{i}]\right)y+\left(b_2+\mathbb{E}[b_4]\right) \tilde{Y}_r\tilde{Y}^{i}_r \\
             &=\left(f_1^{i}+\mathbb{E}[f_3^{i}]\right)+\left(b_1+\mathbb{E}[b_3]\right)y^{i}+\sum_{j=1}^{n}(f_{2}^{ij}+\mathbb{E}[f_{4}^{ij}])y^{j}+\sum_{j=1}^{n}(b_{2}^{j}+\mathbb{E}[b_{4}^{j}])\tilde{Y}^{j}_r\tilde{Y}^i_r,\\
             \overline{H}^{i}(r,y)&=\left(f_1^{i}+\mathbb{E}[f_3^{i}]\right)+\left(b_1+\mathbb{E}[b_3]\right)y^{i}+\sum_{j=1}^{n}(f_{2}^{ij}+\mathbb{E}[f_{4}^{ij}])y^{j}.
            \end{aligned}
            \end{equation*}
            Again, one can easily check that $\tilde{H}$ and $\overline{H}$ are Lipschitz functions on $[T-\delta,T]$. Moreover, $\tilde{Y}$ is the unique solution of the following ODE on $[T-\delta, T]$,
            \begin{equation*}
                \tilde{Y}_u=h_{1}+\mathbb{E}[h_{2}]+\int_u^T\tilde{H}(r,\tilde{Y}_r)dr,
            \end{equation*}
            and the following ODE
            \begin{equation*}
            \overline{\mathbf{y}}_{u} = 2\mathbb{K}+\int_{u}^{T}\overline{H}(r,\overline{\mathbf{y}}_{r})dr
                \end{equation*}
            admits a unique solution on $[T-\delta,T]$, where $\mathbb{K}\in\mathbb{R}^n$ with all elements being $K$. Recalling assumption (A3)(i) and noting that $\tilde{Y}$ is positive on $[T-\delta,T]$, it follows from standard comparison theorem for ODEs that for all $u\in[T-\delta,T]$
            \begin{equation}\label{eq:upperbound}
                \tilde{Y}_u\leq \overline{\mathbf{y}}_u.
            \end{equation}
    Moreover, it holds from Gronwall's inequality that $|\overline{\mathbf{y}}_{u}| \leq M $ for all $u\in[T-\delta,T]$. Therefore, we have for all $u\in[T-\delta,T]$
    \begin{equation}
         |\mathbb{E}[\nabla Y_{u}]\mathbb{E}[\nabla X_{u}]^{-1}|\leq |\overline{\mathbf{y}}_{u}|\leq M. \label{multidimensional C}
    \end{equation}
    Now we consider the case when $\mathbb{E}[X_{T-\delta}^{t,\eta}]= \mathbb{E}[X_{T-\delta}^{t,\bar{\eta}}]$. Indeed, by using \eqref{eq:smallLip}, one can easily check that for $u\in[T-\delta,T]$, 
    \begin{equation*}
     \mathbb{E}[X_{u}^{t,\eta}]-\mathbb{E}[X_{u}^{t,\bar{\eta}}]=0, \quad   \mathbb{E}[Y_{u}^{t,\eta}]-\mathbb{E}[Y_{u}^{t,\bar{\eta}}]=0,
    \end{equation*}
    which together with \eqref{multidimensional C} yields
    \begin{equation}
        |\mathbb{E}[Y_{u}^{t,\eta}]-\mathbb{E}[Y_{u}^{t,\bar{\eta}}]|^{2} \leq M^2|\mathbb{E}[X_{u}^{t,\eta}]-\mathbb{E}[X_{u}^{t,\bar{\eta}}]|^2. \label{small duration expection 3}
    \end{equation}
    Now if $T-2\delta\leq s<T-\delta$, we can repeat the procedure in Step 1 with terminal time $T-\delta$ to obtain for any $t\in[s,T-\delta]$, it holds that
    \begin{equation*}
        \sup_{t\leq r\leq T-\delta}\left|\mathbb{E}[X^{t,\eta}_r]-\mathbb{E}[X^{t,\bar{\eta}}_r]\right|^2+\sup_{t\leq r\leq T-\delta}\left|\mathbb{E}[Y^{t,\eta}_{r}]-\mathbb{E}[Y^{t,\bar{\eta}}_{r}]\right|^2 \leq 2\left(M^2+1\right)|\mathbb{E}[\eta]-\mathbb{E}[\bar{\eta}]|^2,
    \end{equation*}
    which combining with \eqref{eq:estimateDelta} implies that for any $t\in[s,T]$, it holds that 
    \begin{equation*}
        \sup_{t\leq r\leq T}\left|\mathbb{E}[X^{t,\eta}_r]-\mathbb{E}[X^{t,\bar{\eta}}_r]\right|^2+\sup_{t\leq r\leq T}\left|\mathbb{E}[Y^{t,\eta}_{r}]-\mathbb{E}[Y^{t,\bar{\eta}}_{r}]\right|^2 \leq \left(2\left(M^2+1\right)\right)^2|\mathbb{E}[\eta]-\mathbb{E}[\bar{\eta}]|^2.
    \end{equation*}
    \textbf{Step 3:} Let $m$ be the smallest integer satisfying $T-m\delta\leq s$. Repeating the procedure in Step 2 finitely many times, we can get for any $t\in[s,T]$, 
    \begin{equation}
    \sup _{t\leq r \leq T}\left|\mathbb{E}[X_r^{t,\eta}]-\mathbb{E}[X_{r}^{t,\bar{\eta}}]\right|^2+ \sup _{t\leq r \leq T}\left|\mathbb{E}[Y_r^{t,\eta}]-\mathbb{E}[Y_{r}^{t,\bar{\eta}}]\right|^2\leq \left(2\left(M^2+1\right)\right)^m\mathbb{E}[\eta-\bar{\eta}]^{2}.
    \end{equation}
\end{proof}

Noting from Lemma \ref{small bound}, we have that for any $t\in[s,T]$ and $\eta\in L^{2}(\Omega,\mathcal{F}_{t},\mathbb{P};\mathbb{R})$, $\mathbb{E}[X^{t,\eta}_u]$ and $\mathbb{E}[Y^{t,\eta}_u]$ only depends on $\mathbb{E}[\eta]$, for all $u\in [t,T]$. Thus, $(X^{t,x,\eta},Y^{t,x,\eta},Z^{t,x,\eta})$ will only depend on the expectation of $\eta$. We are now ready to give the  definition of decoupling field $U$ and its uniformly Lipschitz continuity property, which is important for us to obtain global solvability for \eqref{mean field FBSDE}.

\begin{lemma}
Suppose assumptions (H) and (A1)-(A3) hold, for any $t\in[s,T]$ and $\eta \in L^{2}(\Omega,\mathcal{F}_{t},\mathbb{P};\mathbb{R})$, FBSDE \eqref{fbsde} has a unique solution $(X^{t,x,\eta},Y^{t,x,\eta},Z^{t,x,\eta}) \in \mathbb{S}^{2}(\mathbb{R}) \times \mathbb{S}^{2}(\mathbb{R}^{n})\times \mathbb{H}^{2}(\mathbb{R}^{n\times d})$ on $[t,T]$ and 
\begin{equation*}
     U(t,x,\mathbb{E}[\eta]) := Y_{t}^{t,x,\eta}
\end{equation*}
 defines a function on  $ \Omega \times [s,T] \times \mathbb{R}\times \mathbb{R}$, which satisfies
    \begin{equation}\label{pasting}
        Y_{r}^{t,\eta} = U(r,X_{r}^{t,\eta},\mathbb{E}[X_{r}^{t,\eta}]),\quad r\in[t,T].
    \end{equation}
 Further it satisfies that for any $t\in[s,T],x_1,x_2,\nu_{1},\nu_{2}\in\mathbb{R}$,
\begin{equation}
    |U(t,x_{1},\nu_{1})-U(t,x_{2},\nu_{2})|\leq C\left( |x_{1}-x_{2}|+\left|\nu_{1}-\nu_{2}\right|\right),\label{tildeU}
\end{equation}
where $C$ is a constant only depending on $n,K,T$ and independent of $t$.\label{small duration decoupling}
\end{lemma}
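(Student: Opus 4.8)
The plan is to establish in turn the unique solvability of \eqref{fbsde} on $[t,T]$, the well-definedness of $U$ together with the pasting identity \eqref{pasting}, and the Lipschitz bound \eqref{tildeU}. Throughout I use the observation (already drawn from Lemma \ref{small bound} just before the statement) that $\mathbb{E}[X^{t,\eta}_u]$ and $\mathbb{E}[Y^{t,\eta}_u]$, hence every mean-field argument appearing in \eqref{fbsde}, is a deterministic function of $u\in[t,T]$ that depends on $\eta$ only through $\mathbb{E}[\eta]$. Consequently \eqref{fbsde} is a classical (non mean-field) coupled FBSDE whose coefficients still satisfy (A1), so its unique solvability on $[t,T]$ and the attendant a priori estimates — in particular $|Y^{t,x_1,\eta}_t-Y^{t,x_2,\eta}_t|\le\bar C|x_1-x_2|$ with $\bar C=\bar C(n,K,T)$, as recalled in the proof of Lemma \ref{lemma} — follow from \cite{hua2022unified} (cf. \cite{delarue2002existence,ma2015well}); and since by Lemma \ref{small bound} the solution, hence $Y^{t,x,\eta}_t$, depends on $\eta$ only through $\mathbb{E}[\eta]$, the prescription $U(t,x,\mathbb{E}[\eta]):=Y^{t,x,\eta}_t$ defines an $\mathcal{F}_t$-measurable function of $(t,x,\mathbb{E}[\eta])$ (the $\omega$-dependence coming from the possibly random coefficients).

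For \eqref{pasting}, fix $r\in[t,T]$ and put $\zeta:=X^{t,\eta}_r\in L^2(\Omega,\mathcal{F}_r,\mathbb{P};\mathbb{R})$. Restricting $(X^{t,\eta},Y^{t,\eta},Z^{t,\eta})$ to $[r,T]$ yields a solution of \eqref{mean field fbsde} with $(t,\eta)$ replaced by $(r,\zeta)$; since $r\ge s$, assumption (H) provides uniqueness, so this restriction is $(X^{r,\zeta},Y^{r,\zeta},Z^{r,\zeta})$ and, in particular, $\mathbb{E}[X^{r,\zeta}_u]=\mathbb{E}[X^{t,\eta}_u]$, $\mathbb{E}[Y^{r,\zeta}_u]=\mathbb{E}[Y^{t,\eta}_u]$ for $u\in[r,T]$ and $Y^{t,\eta}_r=Y^{r,\zeta}_r$. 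It remains to show $Y^{r,\zeta}_r=U(r,\zeta,\mathbb{E}[\zeta])$: once the deterministic mean-field values $\mathbb{E}[X^{r,\zeta}_\cdot],\mathbb{E}[Y^{r,\zeta}_\cdot]$ are substituted, both $(X^{r,\zeta},Y^{r,\zeta},Z^{r,\zeta})$ and, for deterministic $x$, $(X^{r,x,\zeta},Y^{r,x,\zeta},Z^{r,x,\zeta})$ solve one and the same classical FBSDE on $[r,T]$, whose backward value at time $r$ and state $x$ is $u_0(r,x):=U(r,x,\mathbb{E}[\zeta])$. For a simple $\zeta^m=\sum_i x_i\mathbf{1}_{A_i}$ with $A_i\in\mathcal{F}_r$, gluing the solutions started from the $x_i$ over $\{A_i\}$ gives, by uniqueness, $Y^{r,\zeta^m}_r=\sum_i u_0(r,x_i)\mathbf{1}_{A_i}=u_0(r,\zeta^m)$; approximating a general $\zeta$ in $L^2$ by such $\zeta^m$ and invoking the $L^2$-stability of this classical FBSDE with respect to its initial datum together with the Lipschitz continuity of $u_0=U(r,\cdot,\mathbb{E}[\zeta])$ in its state variable yields $Y^{r,\zeta}_r=u_0(r,\zeta)$, i.e. $Y^{t,\eta}_r=U(r,X^{t,\eta}_r,\mathbb{E}[X^{t,\eta}_r])$.

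Finally, for \eqref{tildeU} I split $|U(t,x_1,\nu_1)-U(t,x_2,\nu_2)|\le |U(t,x_1,\nu_1)-U(t,x_2,\nu_1)|+|U(t,x_2,\nu_1)-U(t,x_2,\nu_2)|$; the first term is at most $\bar C|x_1-x_2|$ by the Lipschitz-in-$x$ bound above. For the second, take the constant random variables $\eta_i\equiv\nu_i$, so $U(t,x_2,\nu_i)=Y^{t,x_2,\nu_i}_t$, apply Lemma \ref{lemma} with $\eta=\nu_1$, $\bar\eta=\nu_2$, and note that the left-hand side of \eqref{small stability} dominates $\mathbb{E}[|Y^{t,x_2,\nu_1}_t-Y^{t,x_2,\nu_2}_t|^2\mid\mathcal{F}_t]=|Y^{t,x_2,\nu_1}_t-Y^{t,x_2,\nu_2}_t|^2$; bounding the resulting right-hand side by Lemma \ref{small bound}, which controls $\sup_r|\mathbb{E}[X^{t,\nu_1}_r]-\mathbb{E}[X^{t,\nu_2}_r]|^2+\sup_r|\mathbb{E}[Y^{t,\nu_1}_r]-\mathbb{E}[Y^{t,\nu_2}_r]|^2$ by $C|\nu_1-\nu_2|^2$, gives $|U(t,x_2,\nu_1)-U(t,x_2,\nu_2)|\le C'|\nu_1-\nu_2|$ with $C'=C'(n,K,T)$; adding the two estimates yields \eqref{tildeU}. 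The main obstacle I anticipate is the pasting step — the passage from the decoupling value at deterministic initial data to its value at the random initial condition $X^{t,\eta}_r$ (the gluing-and-approximation argument) and the subinterval identification $\mathbb{E}[X^{r,\zeta}_\cdot]=\mathbb{E}[X^{t,\eta}_\cdot]$ via uniqueness — whereas \eqref{tildeU} is then a routine combination of Lemmas \ref{lemma} and \ref{small bound}.
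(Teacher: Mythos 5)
Your proposal is correct and follows essentially the same route as the paper: solvability and the Lipschitz-in-$x$ bound from \cite{hua2022unified}, the Lipschitz-in-$\nu$ bound by combining Lemma \ref{lemma} with Lemma \ref{small bound} for constant initial data, and the triangle inequality. The only difference is that you spell out the flow/pasting identity \eqref{pasting} (restriction to $[r,T]$, identification via uniqueness under (H), and the gluing-plus-$L^2$-approximation argument for the frozen classical FBSDE), which the paper omits by citing \cite[Proposition 2.2]{chassagneux2022probabilistic}; your version of that step is the standard argument and is sound.
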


\begin{proof}
    For any $t\in[s,T]$ and $\eta \in L^{2}(\Omega,\mathcal{F}_{t},\mathbb{P};\mathbb{R})$, the solvability of FBSDE \eqref{fbsde} on $[t,T]$ follows
    as a straightforward consequence of our previous work \cite{hua2022unified}. Moreover, for any $x_1,x_2\in\mathbb{R},t\in[s,T]$, it holds that 
    \begin{equation}\label{eq:Ulipx}
        |Y^{t,x_1,\eta}_t-Y^{t,x_2,\eta}_t|\leq \bar{C}|x_1-x_2|,
    \end{equation}
    where $\bar{C}$ only depends on $n,K,T$ and is independent of $t$. The proof of decoupling field property is similar with \cite[Proposition 2.2]{chassagneux2022probabilistic} and will be omitted.
For any $x_2\in\mathbb{R},t\in[s,T]$ and $\eta_1,\eta_2 \in L^{2}(\Omega,\mathcal{F}_{t},\mathbb{P};\mathbb{R})$, a direct combination of Lemma \ref{lemma} and Lemma \ref{small bound} implies that 
\begin{equation}
    |Y_{t}^{t,x_2,\eta_{1}}-Y_{t}^{t,x_2,\eta_{2}}|\leq \tilde{C}|\mathbb{E}[\eta_1]-\mathbb{E}[\eta_{2}]|,\label{mean field decoupling}
    \end{equation}
    where $\tilde{C}$ only depends on $n,K,T$ and is independent of $t$. Now, for any $\nu_1,\nu_2\in\mathbb{R}$, taking $\eta_1,\eta_2 \in L^{2}(\Omega,\mathcal{F}_{t},\mathbb{P};\mathbb{R})$ which satisfy $\mathbb{E}[\eta_1]=\nu_1,\mathbb{E}[\eta_2]=\nu_2$, combining with \eqref{eq:Ulipx}, we obtain that 
    \begin{equation}
        \begin{aligned}
        |U(t,x_{1},\nu_{1})-U(t,x_{2},\nu_{2})| &\leq |U(t,x_{1},\nu_{1})-U(t,x_{2},\nu_{1})|+|U(t,x_{2},\nu_{1})-U(t,x_{2},\nu_{2})|\\
        &=|Y^{t,x_1,\eta_1}_t-Y^{t,x_2,\eta_1}_t|+|Y^{t,x_2,\eta_1}_t-Y^{t,x_2,\eta_2}_t|\\
        &\leq C\left(|x_{1}-x_{2}|+|\nu_{1}-\nu_{2}|\right),
        \end{aligned}\label{expection uniform estimate}
        \end{equation}
    where $C$ only depends on $n,K,T$ and is independent of $t$. The proof is complete.
\end{proof}
\begin{remark}
  If we extend our setup by assuming that $\left(\Omega, \mathcal{F}_0, \mathbb{P}\right)$ is rich enough to carry real valued random variables with any arbitrary distribution in $\mathcal{P}_2\left(\mathbb{R}\right)$, we can further represent $U$ function as some function $\tilde{U}$ defined on $\Omega \times [t, T] \times \mathbb{R} \times \mathcal{P}_2\left(\mathbb{R}\right) \mapsto \mathbb{R}^{n}$. Moreover, for any 
    $\mu\in \mathcal{P}_{2}(\mathbb{R})$, we can find $\eta\in L^{2}(\Omega,\mathcal{F}_{t},\mathbb{P};\mathbb{R})$ such that $\mathcal{L}(\eta) = \mu$, and according to the representation of Wassertein distance in terms of random variables (see \cite{carmona2018probabilistic}),
   \begin{equation*}
       W^2_{2}(\mu,\bar{\mu}) = \inf\left\{ \mathbb{E}[(\eta-\bar{\eta})^{2}];~\mathcal{L}(\eta)=\mu,\mathcal{L}(\bar{\eta})=\bar{\mu} \right\},
   \end{equation*}
    we get from \eqref{tildeU} that $\tilde{U}$ is uniformly Lipschitz, i.e.,
    \begin{equation*}
        |\tilde{U}(t,x_{1},\mu)-\tilde{U}(t,x_{2},\bar{\mu})|^{2} \leq \bar{C}\left(|x_{1}-x_{2}|^{2}+W^2_{2}(\mu,\bar{\mu})\right).
       \end{equation*}
\end{remark}

   We are now ready to give the proof of Theorem \ref{global existence}.

\begin{proof}[Proof of Theorem \ref{global existence}]
First, it follows from \cite[Theorem 4.24]{carmona2018probabilistic} that there exists a constant $\delta$ only depending on $C$ given in Lemma \ref{small duration decoupling} (noting that $C\geq K$) such that assumption (H) holds for $s=T-\delta$. Now we divide $[0,T]$ into $m$ small time intervals $0=t_{0}<$ $\cdots<t_{m}=T$ such that $t_{i}-t_{i-1} \leq \delta$, $i=1, \cdots, m$. Since assumption (H) holds for $s=T-\delta$, it follows from Lemma \ref{small duration decoupling} that the function $U(t,\cdot,\cdot)$ satisfies \eqref{tildeU} for any $t\in[t_{m-1},t_{m}]$, which in particular, implies that the Lipschitz constant of $ U(t_{m-1},\cdot,\cdot)$ is bounded by $C$. Thus with terminal condition $U\left(t_{m-1},\cdot,\cdot\right)$ and any initial condition $\eta \in L^{2}(\Omega,\mathcal{F}_{t_{m-2}},\mathbb{P};\mathbb{R})$, FBSDE \eqref{mean field fbsde} admits a unique solution on $\left[t_{m-2}, t_{m-1}\right]$. A standard pasting technique and uniqueness on each interval yield that assumption (H) holds for $s=t_{m-2}$. Repeating the above procedure finitely many times, we obtain that assumption (H) holds for $s=0$, which completes the proof.
\end{proof}
\begin{remark}\label{rem:del}
    In summary, under assumptions $(A1)-(A3)$, for any $t\in[0,T]$ and $\eta \in L^{2}(\Omega,\mathcal{F}_{t},\mathbb{P};\mathbb{R})$, FBSDE \eqref{mean field fbsde} has a unique solution $(X^{t,\eta},Y^{t,\eta},Z^{t,\eta}) \in \mathbb{S}^{2}(\mathbb{R}) \times \mathbb{S}^{2}(\mathbb{R}^{n})\times \mathbb{H}^{2}(\mathbb{R}^{n\times d})$, and it holds that for any $\eta, \bar{\eta} \in L^{2}(\Omega,\mathcal{F}_{t},\mathbb{P};\mathbb{R})$,
\begin{equation*}
    \mathbb{E}\left[|Y^{t,\eta}_t-Y^{t,\bar{\eta}}_t|^2\right]\leq C\mathbb{E}\left[|\eta-\bar{\eta}|^2\right]
\end{equation*}
where $C$ is a positive constant which does not depend on $\eta,\bar{\eta}$ and $t$. Thus, we provide a new situation where the general assumption (H3) in \cite{chassagneux2022probabilistic} is ensured.
\end{remark}
\subsection{Extension for one-dimensional case}
In this subsection, we provide an extension for the solvability of mean field FBSDE \eqref{mean field fbsde} when $n=1$. Indeed, the assumption (A2) (resp. (A3)) can be relaxed to the assumption (B1) (resp. (B2)).
The idea is similar to the multi-dimensional case, except a modification when getting the uniform bound of $\mathbb{E}[\nabla Y]\mathbb{E}[\nabla X]^{-1}$.
 First, for convenience, let us introduce the following two functions:
 \begin{equation*}
   F^{0}(t,y):=f_{1}(t)+(b_{1}(t)+f_{2}(t))y+b_{2}(t)y^{2}  \end{equation*}
 and 
 \begin{equation*}
 \begin{aligned}
F(\theta_{1},\theta_{2};t,y):=&f_{1}(t)+\mathbb{E}[f_{3}(t,\theta_{1},\theta_{2})]+\left(b_{1}(t)+\mathbb{E}[b_{3}(t,\theta_{1},\theta_{2})]\right.\\
 &\left.+f_{2}(t)+\mathbb{E}[f_{4}(t,\theta_{1},\theta_{2})]\right)y+(b_{2}+\mathbb{E}[b_{4}(t,\theta_{1},\theta_{2})])y^{2},    
 \end{aligned}
 \end{equation*}
 where $b_{3}(t,\theta_{1},\theta_{2}),b_{4}(t,\theta_{1},\theta_{2}),f_{3}(t,\theta_{1},\theta_{2}),f_{4}(t,\theta_{1},\theta_{2})$ are bounded measurable processes defined by \eqref{quadratic notation}. We further introduce the following assumptions when $n=1$. We denote 
 \begin{equation*}
    \varepsilon=\frac{1}{\left(K+(K+1)T\right)^2e^{4KT}},\quad \quad \tilde{\varepsilon}=\frac{1}{\left(2K+(2K+1)T\right)^2e^{8KT}}.
 \end{equation*}
\begin{itemize}
\item[\textbf{(B1)}] One of the following holds:\\
 (i) There exists a constant $\lambda$ such that for any $t\in[0,T]$
 \begin{equation}
    \lambda \leq h_{1},\quad F^{0}(t,\lambda) \geq 0, \quad  b_2(t) \leq \varepsilon.
 \end{equation}
(ii) There exists a constant $\lambda$ such that for any $t\in[0,T]$
\begin{equation}
    \lambda \geq h_{1},\quad F^{0}(t,\lambda) \leq 0, \quad b_2(t) \geq-\varepsilon.
\end{equation}
\item[\textbf{(B2)}] One of the following holds:\\
 (i) There exists a constant $\lambda$ such that for any $t\in[0,T]$ and any $\theta_{1}=(\bar{x}_{1},\bar{y}_{1}),\theta_{2}=(\bar{x}_{2},\bar{y}_{2})\in\mathbb{R}\times \mathbb{R}$,
 \begin{equation}
    \lambda \leq h_{1}+{h}_{2}(\bar{x}_{1},\bar{x}_{2}),\quad F(\theta_{1},\theta_{2};t,\lambda) \geq 0, \quad     b_2(t)+b_{4}(t,\theta_{1},\theta_{2})\leq \tilde{\varepsilon}.
 \end{equation}
(ii) There exists a constant $\lambda $ such that for any $t\in[0,T]$ and any $\theta_{1}=(\bar{x}_{1},\bar{y}_{1}),\theta_{2}=(\bar{x}_{2},\bar{y}_{2})\in\mathbb{R}\times \mathbb{R}$,
\begin{equation}
    \lambda \geq h_{1}+{h}_{2}(\bar{x}_{1},\bar{x}_{2}),\quad F(\theta_{1},\theta_{2};t,\lambda)  \leq 0, \quad b_2(t)+b_{4}(t,\theta_{1},\theta_{2}) \geq-\tilde{\varepsilon}.   
  \end{equation}
\end{itemize}
\begin{remark}
If $h_{1} \geq 0$, $f_{1}(t)\geq 0$ and $b_2(t)\leq 0$ for any $t\in[0,T]$, then (B1)(i) is satisfied for $\lambda=0$.
Sufficient conditions can be similarly given for assumptions (B1)(ii) and (B2).
 \label{remark}
\end{remark}
Now we give the main result of this subsection, which is an extension of Theorem \ref{global existence}.
\begin{Theorem}
Suppose $n=1$, under assumption $(A1)$ and assumptions $(B1)-(B2)$, the mean field FBSDE \eqref{mean field FBSDE} has a unique solution $(X,Y,Z) \in \mathbb{S}^{2}(\mathbb{R}) \times \mathbb{S}^{2}(\mathbb{R})\times \mathbb{H}^{2}(\mathbb{R}^{d})$.
\end{Theorem}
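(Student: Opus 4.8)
The strategy mirrors the multi-dimensional argument that established Theorem \ref{global existence}, so I would follow the same three-layer structure: (1) prove a stability estimate for the expectation process analogous to Lemma \ref{small bound}, now only on a small interval of length $\tilde\varepsilon$; (2) deduce the uniform Lipschitz continuity of the decoupling field as in Lemma \ref{small duration decoupling}; (3) paste finitely many short intervals together, invoking \cite[Theorem 4.24]{carmona2018probabilistic} for short-time solvability, exactly as in the proof of Theorem \ref{global existence}. The only place where the argument genuinely differs — and where assumptions (B1)--(B2) replace (A2)--(A3) — is in controlling the ratio $\tilde Y_u := \mathbb{E}[\nabla Y_u]\,\mathbb{E}[\nabla X_u]^{-1}$ along the variational system \eqref{multidimension variation}. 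So the bulk of the work is to redo \textbf{Step 2} of the proof of Lemma \ref{small bound} under the new hypotheses.

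\textbf{Key steps.} First I would record the expectation-level variational system: writing $\nabla X, \nabla Y$ as in the proof of Lemma \ref{small bound} (normalized by $\mathbb{E}[X^{t,\eta}_{T-\tilde\varepsilon}] - \mathbb{E}[X^{t,\bar\eta}_{T-\tilde\varepsilon}]$ when that quantity is nonzero, and handling the zero case separately via the small-interval estimate), one gets the scalar system
\begin{equation*}
\left\{\begin{aligned}
\mathbb{E}[\nabla X_u] &= 1 + \int_{T-\tilde\varepsilon}^u \left( (b_1 + \mathbb{E}[b_3])\mathbb{E}[\nabla X_r] + (b_2 + \mathbb{E}[b_4])\mathbb{E}[\nabla Y_r]\right) dr,\\
\mathbb{E}[\nabla Y_u] &= (h_1 + \mathbb{E}[h_2])\mathbb{E}[\nabla X_T] + \int_u^T \left( (f_1 + \mathbb{E}[f_3])\mathbb{E}[\nabla X_r] + (f_2 + \mathbb{E}[f_4])\mathbb{E}[\nabla Y_r]\right) dr.
\end{aligned}\right.
\end{equation*}
The next step is to show $\mathbb{E}[\nabla X_u]$ stays positive on $[T-\tilde\varepsilon, T]$; as in Lemma \ref{small bound}, on the interval where it is positive one has $(\mathbb{E}[\nabla X_u])^{-1} = \exp(\int_{T-\tilde\varepsilon}^u (-b_1 - \mathbb{E}[b_3] - (b_2 + \mathbb{E}[b_4])\tilde Y_r)\,dr)$, which cannot blow up to infinity once $\tilde Y$ is bounded, so the positivity interval is all of $[T-\tilde\varepsilon,T]$. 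Then $\tilde Y$ solves the Riccati-type ODE $\tilde Y_u = h_1 + \mathbb{E}[h_2] + \int_u^T F(\theta^\eta_r, \theta^{\bar\eta}_r; r, \tilde Y_r)\, dr$ with $F$ the quadratic-in-$y$ function defined just before (B1). Now comes the crucial comparison: using $\lambda \le h_1 + \mathbb{E}[h_2]$ and $F(\theta_1,\theta_2; t, \lambda) \ge 0$ from (B1)(i)/(B2)(i), the constant function $\lambda$ is a subsolution of this ODE (run backward from $T$), so $\tilde Y_u \ge \lambda$ for all $u$; for the upper bound one uses the condition $b_2 + \mathbb{E}[b_4] \le \tilde\varepsilon$ together with the linear-growth comparison against the ODE $\overline{\mathbf y}_u = 2K + \int_u^T \overline H(r, \overline{\mathbf y}_r)\, dr$ (with $\overline H$ obtained by dropping the nonnegative-or-sign-controlled quadratic term), and Gronwall gives $\overline{\mathbf y}_u \le M$ on an interval whose length is dictated precisely by $\tilde\varepsilon$. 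Combining the two bounds yields $|\tilde Y_u| \le \max(|\lambda|, M)$ uniformly, i.e. the analogue of \eqref{multidimensional C}. The rest of Steps 1--3 then goes through verbatim, delivering \eqref{expectation estimate}, then \eqref{tildeU}, then the pasting that pushes solvability to $s=0$.

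\textbf{Main obstacle.} The delicate point is the two-sided a priori bound on $\tilde Y$ along the scalar Riccati ODE. In the multidimensional case (A3) was engineered so that the quadratic term $b_2 + \mathbb{E}[b_4]$ has a definite sign and the off-diagonal $f_2^{ij} + \mathbb{E}[f_4^{ij}]$ are nonnegative, which let one sandwich $\tilde Y$ between $0$ and a linear-ODE solution. Here we instead allow $b_2 + \mathbb{E}[b_4]$ to be merely bounded above by the small constant $\tilde\varepsilon$ (not necessarily $\le 0$), so the lower barrier must be the constant $\lambda$ provided by (B2), and the upper barrier argument must absorb the possibly-positive quadratic term by exploiting that on an interval of length $\tilde\varepsilon$ the quantity $\tilde\varepsilon \cdot (\text{running bound on } \tilde Y)^2$ is controlled — this is exactly why the short horizon $\tilde\varepsilon = (2K + (2K+1)T)^{-2} e^{-8KT}$ is chosen as it is. Getting this bootstrap to close (the bound on $\tilde Y$ feeds into the bound on $\tilde Y$ via the quadratic term) is the one genuinely new estimate; everything else is a careful but routine transcription of the arguments already given for Lemmas \ref{lemma}, \ref{small bound} and \ref{small duration decoupling} and of the pasting in the proof of Theorem \ref{global existence}.
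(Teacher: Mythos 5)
Your proposal follows essentially the same route as the paper's proof: reuse Lemmas \ref{lemma}, \ref{small bound}, \ref{small duration decoupling} and the pasting argument of Theorem \ref{global existence}, the only genuinely new step being the two-sided bound on $\tilde{Y}_u=\mathbb{E}[\nabla Y_u]\mathbb{E}[\nabla X_u]^{-1}$ obtained by comparing the scalar Riccati ODE against the constant subsolution $\lambda$ (using $\lambda\le h_1+\mathbb{E}[h_2]$ and $F(\cdot,\cdot;t,\lambda)\ge 0$) and a linear-ODE supersolution whose Gronwall bound $\tilde{M}$ satisfies $\tilde{\varepsilon}\tilde{M}^2\le 1$, which is exactly how the paper absorbs the possibly positive quadratic term. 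One small correction: $\tilde{\varepsilon}$ is not a time-interval length but the upper bound on the coefficient $b_2+\mathbb{E}[b_4]$ in (B2); the pasting interval has length $\overline{\delta}=\frac{1}{2\overline{M}(3\overline{M}^2+4)}$ with $\overline{M}=\max\{|\lambda|,\tilde{M}\}$, and the supersolution bound holds over the whole horizon rather than on an interval of length $\tilde{\varepsilon}$.
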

\begin{proof}
According to \cite{ma2015well}, the argument of Lemma \ref{lemma} still holds under assumptions (A1) and (B1). We will now show the argument of Lemma \ref{small bound} still holds true under assumptions (A1) and (B2). In particular, Let $\overline{M}:=\max\{|\lambda|,\left[2K+(2K+1)T\right]e^{4KT}\}$, where $\lambda$ is the constant in assumption (B2), and $\overline{\delta}=\frac{1}{2\overline{M}(3\overline{M}^2+4)}$. Noting that $\overline{M}\geq 2K+(2K+1)Te^{4KT}$ with $n=1$, the results of step 1 in Lemma \ref{small bound} also hold if $s\geq T-\overline{\delta}$. We are only left to get the uniform bound of $\mathbb{E}[\nabla Y]\mathbb{E}[\nabla X]^{-1}$. Without loss of any generality, we only give the proof under assumption (B2)(i) and we will omit the same part and only focus on the different part. With same notations as in Lemma \ref{small bound}, it holds that on $[T-\bar{\delta},T]$,
\begin{equation}
\begin{aligned}
    d\tilde{Y}_{r}&=-\left[(f_{1}+\mathbb{E}[f_{3}])+(b_{1}+\mathbb{E}[b_{3}]+f_{2}+\mathbb{E}[f_{4}])\tilde{Y}_{r}+(b_{2}+\mathbb{E}[b_{4}])\tilde{Y}_{r}^{2}\right]dr\\
    &=-F(\Theta_{1},\Theta_{2}; r,\tilde{Y}_r)dr.
\end{aligned}\label{ito formula}
\end{equation}
We now consider the following ODE 
\begin{equation*}
\bar{\mathbf{y}}_{u} = 2K+\int_{u}^{T}\left[4K\bar{\mathbf{y}}_{r}+2K+1\right]dr,
\end{equation*}
which, following from standard ODE theory, admits a unique solution. Moreover, it follows from Gronwall's inequality that  
\begin{equation*}
\bar{\mathbf{y}}_{u}\leq \tilde{M}:=\left[2K+(2K+1)T\right]e^{4KT}.
\end{equation*}
Noting that under assumptions (A1) and (B2)(i), we have $\tilde{\varepsilon} \bar{\mathbf{y}}_{r}^{2}\leq 1$. Therefore, the above ODE has the following equivalent form 
\begin{equation}\label{bar y}
    \bar{\mathbf{y}}_{u} = 2K+\int_{u}^{T}\left[\tilde{\varepsilon}\bar{\mathbf{y}}^{2}_{r}+4K\bar{\mathbf{y}}_{r}+2K+(1-\tilde{\varepsilon} \bar{\mathbf{y}}^{2}_{r})\right]dr.
\end{equation}
Moreover, it holds that\footnote{It is obvious that $\bar{\mathbf{y}}$ is non-negative.}
\begin{equation}\label{upper compare}
F(\Theta_{1},\Theta_{2};r,\bar{\mathbf{y}}_{r}) \leq \tilde{ \varepsilon} {\bar{\mathbf{y}}_{r}}^{2}+4K\bar{\mathbf{y}}_{r}+2K.
\end{equation}
On the other hand, the following ODE
\begin{equation*}
\underline{\mathbf{y}}_{u} = \lambda+\int_{u}^{T}F(\Theta_{1},\Theta_{2};r,\underline{\mathbf{y}}_{r})-F(\Theta_{1},\Theta_{2};r,\lambda)dr,
\end{equation*}
admits a unique solution and $\underline{\mathbf{y}}\equiv \lambda$.
Under assumption (B2)(i), we have $\lambda\leq h_{1}+\mathbb{E}[h_{2}]$ and $F(\Theta_{1},\Theta_{2};r,\lambda)\geq 0$. Combining with \eqref{upper compare} and \eqref{bar y}, applying Lemma 5.1 in \cite{ma2015well}, we obtain 
\begin{equation*}
\lambda=\underline{\mathbf{y}}\leq \tilde{Y}\leq \bar{\mathbf{y}}\leq \tilde{M}.
\end{equation*}  
Thus, following similar arguments as Lemma \ref{small bound}, we can get for any $u\in[T-\overline{\delta},T]$,
\begin{equation}
\left|\mathbb{E}\left[Y_u^{t, \eta}\right]-\mathbb{E}\left[Y_u^{t,\bar{\eta}}\right]\right|^2 \leq \bar{M}^2\left|\mathbb{E}\left[X_u^{t, \eta}\right]-\mathbb{E}\left[X_u^{t,\bar{\eta}}\right]\right|^2.\label{1-dimensional bound}
\end{equation}
 Under assumption (B2)(ii),  \eqref{1-dimensional bound} can be proved similarly. Finally, the statement follows from similar arguments used in Lemma \ref{small bound}, Lemma \ref{small duration decoupling} and Theorem \ref{global existence}. The proof is complete.
\end{proof}
\subsection{Comparison to the existing results}\label{sec:com}
\indent In this subsection, we apply our results to investigate some examples to compare with some of the existing results. In the first example, we revisit a mean-field type FBSDE from \cite{bensoussan2015well}. Our analysis shows that our method would provide a simpler way to verify the solvability. In the second example, we further consider some mean field type FBSDEs which can be handled by our approch, while the conditions needed in \cite{bensoussan2015well} will not be satisfied. Finally, in the last example, we study a mean field type FBSDE arising from a mean field LQ control problem (see \cite{tian2023mean}), whose solvability is given by our approach under conditions beyond the domination-monotonicity condition required in \cite{tian2023mean}. \\
\textbf{Example 1.} For $\alpha \in \mathbb{R}, \lambda \in[0,1]$ and $\theta \in[0,1]$, we consider the following scalar mean field type FBSDE \footnote{see \cite{carmona2013mean} for the special case where $\alpha=1, \lambda=1$ and $\theta=1$. }, 
\begin{equation}
\left\{\begin{array}{l}
d X_t=\alpha \mathbb{E}\left[Y_t\right] d t, \\
d Y_t=-\left(\lambda \mathbb{E}\left[X_t\right]+(1-\lambda) X_t\right) d t+Z_t d W_t, \\
X_0=0, \quad Y_T=\theta \mathbb{E}\left[X_T\right]+(1-\theta) X_T,
\end{array}\right.\label{example 1}
\end{equation}
where $T \in \mathbb{R}^{+}$and satisfies
\begin{equation}
\alpha \sin (\sqrt{\alpha} T)=\sqrt{\alpha} \cos (\sqrt{\alpha} T), \label{alpha condition}
\end{equation}
when $\alpha \geq 0$.\\
\indent We study this equation under $\alpha >0$ and $\alpha  \leq 0$.\\
(a) If $\alpha >0$, since $\lambda \in[0,1]$ and $\theta \in[0,1]$, then we have $f_{1}\ge 0$, $b_2>0$, $f_{1}+f_{3}=1\ge 0$, $h_{1}\ge 0$ and $h_{1}+h_{2}\ge 0$. Thus assumptions (A2) and (A3) are not satisfied. Therefore there might possess non-unique solutions. Indeed, it has been shown in \cite{bensoussan2015well} that the following expressions could serve as solutions for this system:
\begin{equation*}
X_t=K_1 \sin \sqrt{\alpha} t, \quad Y_t=\frac{K_1}{\sqrt{\alpha}} \cos \sqrt{\alpha} t, \quad Z_t=0, \quad K_1 \in \mathbb{R} .
\end{equation*}
(b) If $\alpha \leq 0$, we can easily verify that the system \eqref{example 1} satisfies the assumptions (A1)-(A3), then it follows from Theorem \ref{global existence} that there exists a unique solution for the system, which is indeed given by 
\begin{equation*}
    X_{t} =Y_{t} = Z_{t} = 0.
\end{equation*}
\noindent\textbf{Example 2.} For $ \alpha,\beta \in \mathbb{R}, \lambda \in[0,1], $ and $\theta \in[0,1]$, let us consider the following scalar mean field type FBSDE, 
\begin{equation}
\left\{\begin{array}{l}
d X_t=\left(\alpha \mathbb{E}\left[Y_t\right] + \beta \mathbb{E}\left[X_{t}\right]\right) d t,\\
d Y_t=-\left(\lambda \mathbb{E}\left[X_t\right]+(1-\lambda) X_t\right) d t+Z_t d W_t, \\
X_0=0, \quad Y_T=\theta \mathbb{E}\left[X_T\right]+(1-\theta) X_T.
\end{array}\right.\label{example 2}
\end{equation}  
\indent With our approach, except the Lipschitz continuity, no additional assumption on the forward coefficient with respect to $\mathbb{E}[X_{t}]$ is needed. Thus the additional term $\beta \mathbb{E}[X_{t}]$ has no effect on the conclusion in the example 1, which means we can still get existence and uniqueness of solution when $\alpha\leq 0$. However, this might not be the case when using the continuation method. Let use denote
\begin{equation*}
\begin{aligned}
& U_t=\left(\begin{array}{c}
X_t \\
Y_t \\
Z_t
\end{array}\right), \quad A\left(t ; U_t, \mathbb{P}_{\left(X_t, Y_t, Z_t\right)}\right)=\left(\begin{array}{c}
-\lambda \mathbb{E}\left[X_t\right]-(1-\lambda) X_t \\
\alpha \mathbb{E}\left[Y_t\right] +\beta \mathbb{E}\left[ X_{t}\right]\\
0
\end{array}\right), \\
& g\left(X, \mathbb{P}_{X_T}\right)=\theta \mathbb{E}\left[X_T\right]+(1-\theta) X_T .
\end{aligned}
\end{equation*}
Then we have 
\begin{equation*}
    \begin{aligned}
    \mathbb{E} & {\left[\left\langle g\left(X_T^1, \mathbb{P}_{X_T^1}\right)-g\left(X_T^2, \mathbb{P}_{X_T^2}\right), X_T^1-X_T^2\right\rangle\right] } \\
    & =\theta\left(\mathbb{E}\left[X_T^1-X_T^2\right]\right)^2+(1-\theta) \mathbb{E}\left[\left(X_T^1-X_T^2\right)^2\right] ,
    \end{aligned}
    \end{equation*}
\begin{equation*}
\begin{aligned}
\mathbb{E} & {\left[\left(A\left(t ; U_t^1, \mathbb{P}_{\left(X_t^1, Y_t^1, Z_t^1\right)}\right)-A\left(t ; U_t^2, \mathbb{P}_{\left(X_t^2, Y_t^2, Z_t^2\right)}\right), U_t^1-U_t^2\right)\right] } \\
& =-\lambda\left(\mathbb{E}\left[X_t^1-X_t^2\right]\right)^2-(1-\lambda) \mathbb{E}\left[\left(X_t^1-X_t^2\right)^2\right]+\alpha\left(\mathbb{E}\left[Y_t^1-Y_t^2\right]\right)^2+\alpha \beta \mathbb{E}\left[ Y_{t}^{1}-Y_{t}^{2}\right]\mathbb{E}\left[ X_{t}^{1}-X_{t}^{2}\right].
\end{aligned}
\end{equation*}
Thus assumption (A4) in \cite{bensoussan2015well} might not be satisfied in general.\\
\textbf{Example 3. } Let us consider the following mean field type FBSDE,
\begin{equation}
\left\{\begin{aligned}
dX_{t} & = \left\{A_{t}X_{t}+\bar{A}_{t} \mathbb{E}\left[X_{t}\right]-\frac{B_{t}^{2}}{R_{t}}(Y_{t}-\mathbb{E}[Y_{t}]) -\frac{(B_{t}+\bar{B}_{t})^2}{R_{t}+\bar{R}_{t}} \mathbb{E}[Y_{t}]\right\} dt+\sigma_{t} dW_{t}, \quad t \in[0, T], \\
dY_{t} & =  -\left\{A_{t}Y_{t}+\bar{A}_{t} \mathbb{E}[Y_{t}]+Q_{t}X+\bar{Q}_{t} \mathbb{E}\left[X_{t}\right]\right\} dt + Z_{t}dW_t, \quad t \in[0, T], \\
X_{0}& = x, \quad Y_{T}=G X_{T}+\bar{G} \mathbb{E}\left[X_{T}\right].
\end{aligned}\right.\label{lq example}
\end{equation}\\
\indent We further introduce the following conditions.
\begin{itemize}
\item[(\bf{E1})] $A(\cdot),\bar{A}(\cdot),B(\cdot),\bar{B}(\cdot),Q(\cdot),\bar{Q}(\cdot),R(\cdot),\bar{R}(\cdot)$ are bounded deterministic processes, $G,\bar{G}$ are constants.
    \item [(\bf{E2})] One of the following cases holds:\\
    (i) $G \geq 0, R(\cdot) > 0, Q(\cdot) \geq 0$;\\
    (ii) $G \leq 0, R(\cdot)< 0, Q(\cdot) \leq 0$;
     \item [(\bf{E3})] One of the following cases holds:\\
    (i) $G+\bar{G} \geq 0, R(\cdot)+\bar{R}(\cdot)  > 0, Q(\cdot)+\bar{Q}(\cdot)  \geq 0$;\\
    (ii) $G+\bar{G} \leq 0, R(\cdot)+\bar{R}(\cdot) < 0,Q(\cdot)+\bar{Q}(\cdot)  \leq 0$;
\end{itemize}

\indent Under conditions (E1)-(E3), it is easy to check that the coefficients of mean field type FBSDE \eqref{lq example} satisfy assumptions (A1)-(A3). Therefore it follows from Theorem \ref{global existence} that \eqref{lq example} admits a unique solution $(X,Y,Z)$. It should be mentioned that under conditions (E1), (E2)(i), (E3)(i) or conditions (E1), (E2)(ii), (E3)(ii), domination-monotonicity conditions required in \cite{tian2023mean} are satisfied, from which the solvability of \eqref{lq example} follows. However, one can easily verify that the domination-monotonicity conditions fail under conditions (E1), (E2)(i), (E3)(ii) or conditions (E1), (E2)(i), (E3)(ii).

\section{A representation result}

In this section, we will give a representation result for mean field FBSDE \eqref{mean field fbsde}. First, let us introduce the following Riccati equation
 \begin{equation}
     \left\{
     \begin{aligned}
         &dP_{t}=-(b_{2}(t) P_{t})P_{t}-f_{2}(t) P_{t}-b_{1}(t)P_{t}-f_{1}(t),\\
         &P_{T} = h_{1}.\label{p equation}
     \end{aligned}
     \right.
 \end{equation}
 \begin{lemma}
 Under assumptions $(A1)-(A3)$, the Riccati equation \eqref{p equation} admits a unique solution on $[0,T]$ such that for any $t\in[0,T]$,
 \begin{equation*}
    \left|P_{t}\right| \leq C,
 \end{equation*}
 where $C$ is a constant depending only on $n,K,T$.
 \label{P bound}
 \end{lemma}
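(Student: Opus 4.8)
The plan is to obtain the global solvability, uniqueness and the bound for the Riccati equation \eqref{p equation} by identifying $P$ with the (necessarily linear) decoupling field of an auxiliary noiseless linear FBSDE that is covered by Theorem \ref{global existence} and Lemma \ref{small duration decoupling}. In this way the a priori estimate is inherited for free, rather than re-derived through the multidimensional ODE comparison carried out in Lemma \ref{small bound}. The only genuine difficulty is the usual one for quadratic ODEs, namely ruling out finite-time blow-up; this is exactly what the monotonicity assumption (A2) secures, and the argument below imports that fact via the uniform Lipschitz bound on the decoupling field of the linear system.

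\emph{Local solvability.} The right-hand side $(t,P)\mapsto -(b_2(t)P)P-f_2(t)P-b_1(t)P-f_1(t)$ is measurable in $t$, polynomial (hence locally Lipschitz) in $P$, and dominated on each ball $\{|P|\le R\}$ by a $t$-independent constant depending only on $n,K,R$. By Carathéodory's existence and uniqueness theorem, \eqref{p equation} has a unique absolutely continuous solution on a maximal (backward) interval $(\tau^*,T]$, and $\limsup_{t\downarrow\tau^*}|P_t|=\infty$ whenever $\tau^*\ge 0$.

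\emph{The auxiliary linear FBSDE.} Consider \eqref{mean field FBSDE} with $b_0\equiv 0$, $f_0\equiv 0$, $h_2\equiv 0$ and $\sigma\equiv 0$. Its coefficients are bounded by $K$, so (A1) holds; moreover $b_3=b_4=f_3=f_4\equiv 0$ and $h_2^i(\cdot)\equiv 0$, so (A3) reduces to (A2), which is assumed. Hence, by Theorem \ref{global existence} — whose proof establishes assumption (H) for $s=0$ — together with Lemma \ref{small duration decoupling}, the auxiliary system is uniquely solvable from every initial time $t\in[0,T]$ and every initial datum, and its decoupling field $U^0\colon[0,T]\times\mathbb{R}\to\mathbb{R}^n$ (the mean-field argument is inert, since no expectations appear) satisfies $|U^0(t,x_1)-U^0(t,x_2)|\le C|x_1-x_2|$ with $C$ depending only on $n,K,T$. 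Because the system is linear and homogeneous, superposition of solutions together with uniqueness shows that $U^0(t,\cdot)$ is linear, so $U^0(t,x)=\Pi_t x$ for the vector $\Pi_t:=U^0(t,1)\in\mathbb{R}^n$, and $|\Pi_t|=|U^0(t,1)-U^0(t,0)|\le C$ for every $t\in[0,T]$.

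\emph{Identification and conclusion.} On $(\tau^*,T]$, set $\widehat X_s:=x\exp\!\big(\int_t^s(b_1(r)+b_2(r)P_r)\,dr\big)$; a direct computation, using that $\widehat X_s$ is scalar and the cancellations $b_1P-b_1P=0$ and $(b_2P)P-P(b_2P)=0$, shows that $(\widehat X,\,P\widehat X,\,0)$ solves the auxiliary FBSDE on $[t,T]$ for every $t\in(\tau^*,T]\cap[0,T]$, with terminal value $P_T\widehat X_T=h_1\widehat X_T$. By the uniqueness of the previous step this yields $P_tx=Y^{t,x}_t=U^0(t,x)=\Pi_t x$, hence $P_t=\Pi_t$ and $|P_t|\le C$ on $(\tau^*,T]\cap[0,T]$. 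If $\tau^*\ge 0$ this contradicts the blow-up alternative, so $\tau^*<0$; therefore the Riccati solution exists on all of $[0,T]$, is unique there by the local theory, and satisfies $|P_t|\le C$ with $C=C(n,K,T)$. An alternative, slightly less elementary route would be to define $P_t:=U^0(t,1)$ directly, prove it is absolutely continuous in $t$, and differentiate the flow identity $Y^{t,x}_s=P_sX^{t,x}_s$ to recover \eqref{p equation}; the maximal-interval argument above avoids having to discuss the time-regularity of $U^0$.
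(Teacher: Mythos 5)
Your proof is correct. It is worth noting that the paper itself does not actually carry out this argument: its proof of Lemma \ref{P bound} is a one-line deferral to the authors' earlier work \cite{hua2022unified}. What you do instead is give a self-contained derivation entirely inside the present paper's framework: you strip the mean-field and source terms ($b_0=f_0=h_2=\sigma=0$), observe that (A3) collapses to (A2) for this reduced system so that Theorem \ref{global existence} and Lemma \ref{small duration decoupling} apply, use linearity plus uniqueness to see that the resulting decoupling field is $U^0(t,x)=\Pi_t x$ with $|\Pi_t|\le C(n,K,T)$, and then identify $P_t=\Pi_t$ on the maximal existence interval of the Riccati flow via the explicit solution $(\widehat X, P\widehat X, 0)$, so that the blow-up alternative rules out a finite escape time in $[0,T]$. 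The algebraic cancellation in the verification that $P\widehat X$ solves the backward equation is exactly right (the quadratic terms $(b_2\cdot P)P$ cancel because $b_2\cdot P$ is scalar), and there is no circularity since Theorem \ref{global existence} is established before this lemma and does not use it. The one thing your route buys beyond the paper's citation is transparency: the a priori bound on $P$ is exhibited as nothing more than the uniform Lipschitz constant of the linear decoupling field, which is presumably also the content of the cited reference, but you make it explicit. A minor point you might state for completeness is that additivity of $x\mapsto U^0(t,x)$ obtained from superposition and uniqueness yields genuine $\mathbb{R}$-linearity only after invoking the (already established) continuity of $U^0(t,\cdot)$; you have that continuity from \eqref{tildeU}, so this is cosmetic.
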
 
 \begin{proof}
 Under assumptions (A1)-(A3), we can show that the existence and uniqueness of solution of the equation \eqref{p equation} on [0,T] following similar argument as \cite{hua2022unified} and there exists a constant $C$ depending only on $n,K,T$ such that for any $t\in[0,T]$, $\left|P_{t}\right| \leq C$.
 \end{proof}
 
For any $t\in[0,T],\nu\in\mathbb{R}$, we now introduce the following mean field FBSDE
 \begin{equation}
 \left\{
     \begin{aligned}
      d \nu_{s}^{t,\nu} &= (b_{1}(s) +b_{2}(s) P_{s}) \nu_{s}^{t,\nu} + b_{2}(s)\mathbb{E}[\varphi_{s}^{t,\nu}] + \mathbb{E}[ b_{0}(s,\nu_{s},P_{s}\nu_{s}^{t,\nu}+\mathbb{E}[\varphi_{s}^{t,\nu}])]ds,\\
      d\varphi_{s}^{t,\nu}&=- f_{2}(s)\varphi_{s}^{t,\nu}-(b_{2}(s)\varphi_{s}^{t,\nu})P_{s} -f_{0}(s,\nu_{s}^{t,\nu},P_{s}\nu_{s}^{t,\nu}+\mathbb{E}[\varphi_{s}^{t,\nu}])\\
      &\quad-P_{s}b_{0}(s,\nu_{s}^{t,\nu},P_{s}\nu_{s}^{t,\nu}+\mathbb{E}[\varphi_{s}^{t,\nu}])ds+z^{t,\nu}_{s}dW_{s},\\
      \nu_{t}&=\nu,\quad \varphi_{T} = h_{2}(\nu_{T}^{t,\nu}).
     \end{aligned}
     \right.\label{v equation}
 \end{equation}
 \begin{Theorem}
 Let assumptions $(A1)-(A3)$ hold, then for any $(t,\nu) \in [0,T]\times \mathbb{R}$, FBSDE \eqref{v equation} admits a unique solution such that for any $s\in[t,T]$,
 \begin{equation}
   \nu^{t,\nu}_s= \mathbb{E}[X_{s}^{t,\eta}],\quad \varphi_{s}^{t,\nu}=Y_{s}^{t,\eta}-P_{s}X_{s}^{t,\eta},\label{u x v}
   \end{equation}
where $X^{t,\eta},Y^{t,\eta}$ are the first two components of the unique solution of mean field FBSDE \eqref{mean field fbsde} with initial condition $\eta\in L^{2}(\Omega,\mathcal{F}_{t},\mathbb{P};\mathbb{R})$ satisfying $\mathbb{E}[\eta] = \nu$ and $P$ is the unique solution of \eqref{p equation}.\label{phi theorem}
\end{Theorem}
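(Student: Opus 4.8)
The plan is to realize \eqref{v equation} as the image of \eqref{mean field fbsde} under the linear change of variables $\varphi_s=Y_s-P_sX_s$, where $P$ is the solution of the Riccati equation \eqref{p equation}. The point is that \eqref{p equation} is the precise algebraic identity that makes all the \emph{pathwise} terms in $X_s$ (as opposed to the mean‑field terms in $\mathbb{E}[X_s]$) cancel, so that $\varphi$ solves a closed system together with $\nu=\mathbb{E}[X_\cdot]$.

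\textbf{Step 1 (existence and the representation \eqref{u x v}).} Fix $(t,\nu)\in[0,T]\times\mathbb{R}$ and pick any $\eta\in L^{2}(\Omega,\mathcal{F}_{t},\mathbb{P};\mathbb{R})$ with $\mathbb{E}[\eta]=\nu$ (e.g.\ $\eta\equiv\nu$). Let $(X^{t,\eta},Y^{t,\eta},Z^{t,\eta})\in\mathbb{S}^{2}(\mathbb{R})\times\mathbb{S}^{2}(\mathbb{R}^{n})\times\mathbb{H}^{2}(\mathbb{R}^{n\times d})$ be the unique solution of \eqref{mean field fbsde} provided by Theorem \ref{global existence} and Remark \ref{rem:del}; recall from Section 3 that $\mathbb{E}[X^{t,\eta}_\cdot]$, $\mathbb{E}[Y^{t,\eta}_\cdot]$ and hence the pair $(X^{t,\eta},Y^{t,\eta})$ depend on $\eta$ only through $\nu$. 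Let $P$ be the unique bounded solution of \eqref{p equation} from Lemma \ref{P bound}, and set
\[
\nu_{s}:=\mathbb{E}[X^{t,\eta}_{s}],\qquad \varphi_{s}:=Y^{t,\eta}_{s}-P_{s}X^{t,\eta}_{s},\qquad z_{s}:=Z^{t,\eta}_{s}-P_{s}\,\sigma\big(s,X^{t,\eta}_{s},Y^{t,\eta}_{s},\mathbb{E}[X^{t,\eta}_{s}],\mathbb{E}[Y^{t,\eta}_{s}]\big).
\]
Since $P$ is deterministic and absolutely continuous, the product rule applied to $P_{s}X^{t,\eta}_{s}$ together with the dynamics \eqref{mean field fbsde} gives, after substituting $Y^{t,\eta}_{s}=P_{s}X^{t,\eta}_{s}+\varphi_{s}$, that the drift of $d\varphi_{s}$ has coefficient $-\big(\dot{P}_{s}+b_{1}(s)P_{s}+(b_{2}(s)P_{s})P_{s}+f_{2}(s)P_{s}+f_{1}(s)\big)$ in front of $X^{t,\eta}_{s}$, which is $0$ by \eqref{p equation}; the remaining terms are exactly the $\varphi$‑dynamics of \eqref{v equation}. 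Taking expectations in the forward equation and using $\mathbb{E}[Y^{t,\eta}_{s}]=P_{s}\nu_{s}+\mathbb{E}[\varphi_{s}]$ yields the $\nu$‑dynamics, while $\varphi_{T}=h_{1}X^{t,\eta}_{T}+h_{2}(\mathbb{E}[X^{t,\eta}_{T}])-h_{1}X^{t,\eta}_{T}=h_{2}(\nu_{T})$. The required integrability of $(\nu,\varphi,z)$ follows from $(X^{t,\eta},Y^{t,\eta},Z^{t,\eta})$ lying in the stated spaces, the boundedness of $P$, and assumption (A1). Hence $(\nu,\varphi,z)$ solves \eqref{v equation} and satisfies \eqref{u x v} by construction.

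\textbf{Step 2 (uniqueness).} Let $(\nu^{t,\nu},\varphi^{t,\nu},z^{t,\nu})$ be an arbitrary solution of \eqref{v equation} and take $\eta\equiv\nu$. Since $\nu^{t,\nu}_{\cdot}$ and $\mathbb{E}[\varphi^{t,\nu}_{\cdot}]$ are deterministic, the linear (non--McKean--Vlasov) SDE
\[
dX_{s}=\Big[b_{1}(s)X_{s}+b_{2}(s)\big(P_{s}X_{s}+\varphi^{t,\nu}_{s}\big)+b_{0}\big(s,\nu^{t,\nu}_{s},P_{s}\nu^{t,\nu}_{s}+\mathbb{E}[\varphi^{t,\nu}_{s}]\big)\Big]ds+\sigma\big(s,X_{s},P_{s}X_{s}+\varphi^{t,\nu}_{s},\nu^{t,\nu}_{s},P_{s}\nu^{t,\nu}_{s}+\mathbb{E}[\varphi^{t,\nu}_{s}]\big)dW_{s},
\]
$X_{t}=\eta$, has a unique solution $X\in\mathbb{S}^{2}(\mathbb{R})$. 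Taking expectations, $\mathbb{E}[X_{\cdot}]$ and $\nu^{t,\nu}_{\cdot}$ solve the same linear ODE with the same initial value $\nu$, so $\mathbb{E}[X_{s}]=\nu^{t,\nu}_{s}$ for all $s$ by Gronwall's inequality. Put $Y_{s}:=P_{s}X_{s}+\varphi^{t,\nu}_{s}$ and $Z_{s}:=z^{t,\nu}_{s}+P_{s}\sigma\big(s,X_{s},Y_{s},\mathbb{E}[X_{s}],\mathbb{E}[Y_{s}]\big)$; then $\mathbb{E}[Y_{s}]=P_{s}\nu^{t,\nu}_{s}+\mathbb{E}[\varphi^{t,\nu}_{s}]$, and Itô's formula together with \eqref{p equation} (run in the reverse direction to Step 1) shows that $(X,Y,Z)$ solves \eqref{mean field fbsde} with initial datum $\eta$, the terminal condition being $Y_{T}=P_{T}X_{T}+h_{2}(\nu^{t,\nu}_{T})=h_{1}X_{T}+h_{2}(\mathbb{E}[X_{T}])$. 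By uniqueness for \eqref{mean field fbsde} (Theorem \ref{global existence}), $(X,Y,Z)=(X^{t,\eta},Y^{t,\eta},Z^{t,\eta})$, whence $\nu^{t,\nu}_{s}=\mathbb{E}[X^{t,\eta}_{s}]$, $\varphi^{t,\nu}_{s}=Y^{t,\eta}_{s}-P_{s}X^{t,\eta}_{s}$ and $z^{t,\nu}_{s}=Z^{t,\eta}_{s}-P_{s}\sigma(\,\cdots)$. This gives uniqueness and simultaneously reproves \eqref{u x v}.

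\textbf{Main obstacle.} The essential point is the Itô computation establishing that \eqref{p equation} is exactly the identity that annihilates every $X_{s}$‑term (as opposed to $\mathbb{E}[X_{s}]$) when passing between \eqref{mean field fbsde} and \eqref{v equation}; this requires careful bookkeeping of dimensions ($X$ scalar; $Y,P,b_{2},f_{1},h_{1}\in\mathbb{R}^{n}$; $f_{2}\in\mathbb{R}^{n\times n}$) and of the two distinct products $b_{2}(s)P_{s}\in\mathbb{R}$ and $(b_{2}(s)\varphi_{s})P_{s}\in\mathbb{R}^{n}$. The only other subtlety, that the auxiliary SDE for $X$ in Step 2 is a genuine SDE whose solution satisfies $\mathbb{E}[X_{s}]=\nu^{t,\nu}_{s}$ only a posteriori, is resolved by the uniqueness of linear ODEs.
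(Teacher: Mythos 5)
Your proof is correct, and the core idea --- the linear change of variables $\varphi_s=Y_s-P_sX_s$ with $P$ solving the Riccati equation \eqref{p equation} so that all pathwise $X_s$-terms cancel --- is exactly the paper's. For existence the two arguments are the same It\^o computation organized differently: the paper first introduces the auxiliary mean field BSDE \eqref{mean field bsde} (well-posed by standard theory since $\mathbb{E}[X^{t,\eta}]$ is already known), then shows $P_sX_s^{t,\eta}+\varphi_s-Y_s^{t,\eta}$ solves a linear BSDE with zero terminal data and hence vanishes; you instead define $\varphi:=Y^{t,\eta}-PX^{t,\eta}$ directly and verify its dynamics, which is equivalent. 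Where you genuinely diverge is uniqueness: the paper disposes of it by citing local well-posedness of mean field FBSDEs together with relation \eqref{u x v} and Lemma \ref{small duration decoupling}, whereas you invert the transformation --- from an arbitrary solution of \eqref{v equation} you rebuild a solution of \eqref{mean field fbsde} by solving an auxiliary linear SDE for $X$, matching $\mathbb{E}[X_\cdot]$ with $\nu^{t,\nu}_\cdot$ through the linear ODE they both satisfy, and then invoke the global uniqueness of Theorem \ref{global existence}. Your route is more self-contained and makes transparent that \eqref{v equation} and \eqref{mean field fbsde} are equivalent under the substitution, at the modest cost of the extra auxiliary SDE; note also that your two steps combined do recover the full claim for \emph{every} $\eta$ with $\mathbb{E}[\eta]=\nu$ (Step 1 applied to an arbitrary such $\eta$ produces a solution of \eqref{v equation}, which by your Step 2 must coincide with the one built from $\eta\equiv\nu$), so no generality is lost by fixing $\eta\equiv\nu$ in Step 2.
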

\begin{proof} For any $(t,\nu)\in[0,T]\times\mathbb{R}$ and $\eta\in L^{2}(\Omega,\mathcal{F}_{t},\mathbb{P};\mathbb{R})$ satisfying $\mathbb{E}[\eta] = \nu$, it follows from Theorem \ref{global existence} that mean field FBSDE \eqref{mean field fbsde} admits a unique solution $(X^{t,\eta},Y^{t,\eta},Z^{t,\eta})$. Thus, it follows from standard theory of mean field BSDE theory (see \cite{carmona2018probabilistic}) that the following mean field BSDE
\begin{equation}
\left\{
    \begin{aligned}
      d\varphi_{s}&=- f_{2}(s)\varphi_{s}-(b_{2}(s)\varphi_{s})P_{s} -f_{0}(s,\mathbb{E}[X_{s}^{t,\eta}],P_{s}\mathbb{E}[X_{s}^{t,\eta}]+\mathbb{E}[\varphi_{s}])\\
       &\quad-P_{s}b_{0}(s,\mathbb{E}[X_{s}^{t,\eta}],P_{s}\mathbb{E}[X_{s}^{t,\eta}]+\mathbb{E}[\varphi_{s}])ds+z_{s}dW_{s},\\
      \varphi_{T} & = h_{2}(\mathbb{E}[X_{T}^{t,\eta}]),
    \end{aligned}\right.\label{mean field bsde}
\end{equation}
is well-posed on $[t,T]$. Now for any $s\in[t,T]$, applying It\^{o}'s formula to $P_{s}X_{s}^{t,\eta}+\varphi_{s}-Y^{t,\eta}_s$, it yields
\begin{equation*}
\begin{aligned}
d(P_{s}X_{s}^{t,\eta}+\varphi_{s}-Y^{t,\eta}_s) = &(-(b_{2}(s) P_{s})P_{s}-f_{2}(s) P_{s}-b_{1}(s)P_{s}-f_{1}(s))X_s^{t,\eta} ds  \\
& +P_s\left[b_{1}(s)X_s^{t,\eta}+b_{2}(s) Y_{s}^{t,\eta}+ b_{0}(s,\mathbb{E}[X^{t,\eta}_{s}],\mathbb{E}[Y_{s}^{t,\eta}])\right]ds\\
&\quad+P_s \sigma(s,X_{s}^{t,\eta},Y_{s}^{t,\eta},\mathbb{E}[X_{s}^{t,\eta}],\mathbb{E}[Y_{s}^{t,\eta}]) d W_s \\
& +\left[-f_{2}(s)\varphi_{s}-P_{s}(b_{2}(s)\varphi_{s}) -f_{0}(s,\mathbb{E}[X_{s}^{t,\eta}],P_{s}\mathbb{E}[X_{s}^{t,\eta}]+\mathbb{E}[\varphi_{s}])\right.\\
&\left.-P_{s}b_{0}(s,\mathbb{E}[X_{s}^{t,\eta}],P_{s}\mathbb{E}[X_{s}^{t,\eta}]+\mathbb{E}[\varphi_{s}])\right] d s+z_{s}dW_{s},\\
&+\left(f_{1}(s)X_s^{t,\eta}+f_{2}(s)Y_s^{t,\eta}+f_{0}(s,\mathbb{E}[X_s^{t,\eta}],\mathbb{E}[Y_{s}^{t,\eta}])\right)dt-Z^{t,\eta}_sdW_{s}.
\end{aligned}
\end{equation*}
After rearrangement, we have
\begin{equation}\label{BSDE 0}
    \begin{aligned}
    d(P_{s}X_{s}^{t,\eta}+\varphi_{s}-Y^{t,\eta}_s) = &\left(-f_2(s)\left(P_{s}X_{s}^{t,\eta}+\varphi_{s}-Y^{t,\eta}_s\right)-P_s\left(b_2(s)(P_{s}X_{s}^{t,\eta}+\varphi_{s}-Y^{t,\eta}_s)\right)\right)ds\\
    &+P_s\left(b_{0}(s,\mathbb{E}[X^{t,\eta}_{s}],\mathbb{E}[Y_{s}^{t,\eta}])-b_{0}(s,\mathbb{E}[X_{s}^{t,\eta}],P_{s}\mathbb{E}[X_{s}^{t,\eta}]+\mathbb{E}[\varphi_{s}])\right)ds\\
    &+\left(f_{0}(s,\mathbb{E}[X_s^{t,\eta}],\mathbb{E}[Y_{s}^{t,\eta}])-f_{0}(s,\mathbb{E}[X_{s}^{t,\eta}],P_{s}\mathbb{E}[X_{s}^{t,\eta}]+\mathbb{E}[\varphi_{s}])\right)ds\\
    & +\left(P_s \sigma(s,X_{s}^{t,\eta},Y_{s}^{t,\eta},\mathbb{E}[X_{s}^{t,\eta}],\mathbb{E}[Y_{s}^{t,\eta}]) +z_s-Z^{t,\eta}_s\right)dW_{s}.
    \end{aligned}
    \end{equation}
Noting that $P_T X_T^{t,\eta}+\varphi_T-Y^{t,\eta}_T=h_{1}X_T^{t,\eta}+h_{2}\left(\mathbb{E}[X_{T}^{t,\eta}]\right)-h_{1}X_T^{t,\eta}-h_{2}\left(\mathbb{E}[X_{T}^{t,\eta}]\right)=0$, it follows from standard ODE and BSDE theory that for all $s\in[t,T]$,
\begin{equation}
\varphi_{s}=
  Y_{s}^{t,\eta}-P_{s}X_{s}^{t,\eta}. \label{linear form} 
\end{equation}
 \indent On the other hand, one can esasily verify the following ODE
 \begin{equation*}
    \left\{
     \begin{aligned}
      d \nu_{s} &= (b_{1}(s) +b_{2}(s) P_{s}) \nu_{s} + b_{2}(s)\mathbb{E}[Y_{s}^{t,\eta}-P_{s}X_{s}^{t,\eta}] + \mathbb{E}[ b_{0}(s,\nu_{s},P_{s}\nu_{s}+\mathbb{E}[Y_{s}^{t,\eta}-P_{s}X_{s}^{t,\eta}])]ds, \\
      \nu_t&=\nu,
     \end{aligned}
    \right.
 \end{equation*}
 admits a unique solution which satisfies $\nu_{s} = \mathbb{E}[X_{s}^{t,\eta}]$ for any $s\in[t,T]$. Indeed, we have
 \begin{equation}
 \begin{aligned}
  d \mathbb{E}[X_{s}^{t,\eta}]    &=b_{1}(s)\mathbb{E}[X_{s}^{t,\eta}]+b_{2}(s)\mathbb{E}[Y_{s}^{t,\eta}]+\mathbb{E}[b_{0}(s,\mathbb{E}[X_{s}^{t,\eta}],\mathbb{E}[Y_{s}^{t,\eta}])]ds\\
  &= (b_{1}(s) +b_{2}(s) P_{s}) \mathbb{E}[X_{s}^{t,\eta}] + b_{2}(s)\mathbb{E}[Y_{s}^{t,\eta}-P_{s}X_{s}^{t,\eta}] \\
  &\quad+ \mathbb{E}[ b_{0}(s,\mathbb{E}[X_{s}^{t,\eta}],P_{s}\mathbb{E}[X_{s}^{t,\eta}]+\mathbb{E}[Y_{s}^{t,\eta}-P_{s}X_{s}^{t,\eta}])]ds,
 \end{aligned}
 \end{equation}
 and $\mathbb{E}[X^{t,\eta}_{t}] = \mathbb{E}[\eta] = \nu$. Therefore, in view of the solvability of mean field BSDE \eqref{mean field bsde} and the relation \eqref{linear form}, FBSDE \eqref{v equation} admits a solution satifying \eqref{u x v}. Finally, the uniqueness follows from the standard local well-posedness theory of mean field FBSDEs (see e.g. \cite{carmona2018probabilistic}), relation \eqref{u x v} and Lemma \ref{small duration decoupling}.
 \end{proof}
\section{Classical solutions of related master equations}
In this section, we will consider classical solutions of master equations which are related to mean field FBSDE \eqref{mean field fbsde}. More precisely, we study classical solution of the following master equation
\begin{equation}
\left\{\begin{aligned}
&\partial_t U(t, x, \nu)+\partial_x U(t, x, \nu)\left[b_{1}(t)x+b_{2}(t) U(t, x, \nu)+b_{0}\left(t,\nu,\mathbb{E}[U(t, \eta, \nu)]\right)\right]\\
&+\frac{1}{2} \partial_{x x} U(t, x, \nu)\left|\sigma(t,x,U(t,x,\nu),\nu,\mathbb{E}[U(t,\eta,\nu)])\right|^2\\
&+\partial_\nu U(t, x, \nu)\left[b_{1}(t)\nu+b_{2}(t) \mathbb{E}[U(t, \eta, \nu)]+b_{0}(t,\nu,\mathbb{E}[U(t, \eta, \nu)])\right] \\
&+f_{1}(t)x+f_{2}(t)U(t,x,\nu)+f_{0}(t,\nu,\mathbb{E}[U(t,\eta,\nu)])=0, \\
&U(T, x, \nu)=h_{1}x+h_{2}(\nu),
\end{aligned}\right.\label{master equation}
\end{equation}
where $\eta \in L^{2}(\Omega,\mathcal{F}_{t},\mathbb{P};\mathbb{R})$ with $\mathbb{E}[\eta] = \nu$.  We further introduce the following assumption:\\
\textbf{Assumption (A4):} The functions $f_{0},b_{0},\sigma,h_{2}$ are deterministic satisfying for any $t\in[0,T]$,
\begin{equation*}
    |b_0(t,0,0)|\leq K,~|f_0(t,0,0)|\leq K,~|\sigma(t,0,0,0,0)|\leq K,~|h_2(0)|\leq K,
\end{equation*}
 and $f_{0}(t,\cdot,\cdot)\in C^{1}(\mathbb{R}\times \mathbb{R}^{n};\mathbb{R}^n),b_{0}(t,\cdot,\cdot)\in C^{1}(\mathbb{R}\times \mathbb{R}^{n};\mathbb{R})$ and $h_{2}\in C^{1}(\mathbb{R};\mathbb{R}^{n})$.\\
\indent In this setting, the mean field FBSDE \eqref{v equation} degenerates to the following forward backward ordinary differential equation (FBODE)
\begin{equation}
\left\{
    \begin{aligned}
      d \nu_{s}^{t,\nu} &= (b_{1}(s) +b_{2}(s) P_{s}) \nu_{s}^{t,\nu} + b_{2}(s)\varphi_{s}^{t,\nu} +  b_{0}(s,\nu_{s}^{t,\nu},P_{s}\nu_{s}^{t,\nu}+\varphi_{s}^{t,\nu})ds\\
      d\varphi_{s}^{t,\nu}&=- f_{2}(s)\varphi_{s}^{t,\nu}-(b_{2}(s)\varphi_{s}^{t,\nu})P_{s} -f_{0}(s,\nu_{s}^{t,\nu},P_{s}\nu_{s}^{t,\nu}+\varphi_{s})-P_{s}b_{0}(s,\nu_{s}^{t,\nu},P_{s}\nu_{s}^{t,\nu}+\varphi_{s}^{t,\nu})ds,\\
      \nu_{t}^{t,\nu}&=\nu,\varphi_{T} ^{t,\nu}= h_{2}(\nu_{T}^{t,\nu}).
     \end{aligned}
     \right.\label{markovian phi}
\end{equation}
\begin{Theorem}
    Let assumptions $(A1)-(A3)$ hold and $f_{0},b_{0},\sigma,h_{2}$ are deterministic, then for any $(t,\nu) \in [0,T]\times \mathbb{R}$, FBODE \eqref{markovian phi} admits a unique solution such that for any $s\in[t,T]$,
    \begin{equation*}
      \nu^{t,\nu}_s= \mathbb{E}[X_{s}^{t,\eta}],\quad \varphi_{s}^{t,\nu}=Y_{s}^{t,\eta}-P_{s}X_{s}^{t,\eta},
      \end{equation*}
   where $X^{t,\eta},Y^{t,\eta}$ are the first two components of the unique solution of mean field FBSDE \eqref{mean field fbsde} with initial condition $\eta\in L^{2}(\Omega,\mathcal{F}_{t},\mathbb{P};\mathbb{R})$ satisfying $\mathbb{E}[\eta] = \nu$ and $P$ is the unique solution of \eqref{p equation}.\label{markovian phi theorem}
   \end{Theorem}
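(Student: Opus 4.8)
The plan is to deduce Theorem~\ref{markovian phi theorem} from Theorem~\ref{phi theorem}: once $b_{0},f_{0},h_{2}$ are deterministic, the unique solution of the mean field FBSDE~\eqref{v equation} produced there turns out to be deterministic with vanishing martingale part, and hence solves the FBODE~\eqref{markovian phi}. Concretely, I would start from Theorem~\ref{phi theorem} together with Lemma~\ref{P bound}, which gives, for each $(t,\nu)\in[0,T]\times\mathbb{R}$, a unique triple $(\nu^{t,\nu},\varphi^{t,\nu},z^{t,\nu})$ solving \eqref{v equation} and satisfying $\nu^{t,\nu}_{s}=\mathbb{E}[X_{s}^{t,\eta}]$ and $\varphi^{t,\nu}_{s}=Y_{s}^{t,\eta}-P_{s}X_{s}^{t,\eta}$ for any $\eta$ with $\mathbb{E}[\eta]=\nu$. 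The whole task is then to show that $z^{t,\nu}\equiv 0$ and that $\varphi^{t,\nu}$ is deterministic, after which \eqref{v equation} collapses exactly to \eqref{markovian phi}.

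For the key steps: first, taking expectations in \eqref{mean field fbsde} exactly as in Step~1 of the proof of Lemma~\ref{small bound} shows that $s\mapsto\mathbb{E}[X_{s}^{t,\eta}]$ and $s\mapsto\mathbb{E}[Y_{s}^{t,\eta}]$ solve a closed system of ordinary differential equations with genuinely deterministic coefficients, this being precisely where determinism of $b_{0},f_{0},h_{2}$ enters (and these expectations depend on $\eta$ only through $\nu=\mathbb{E}[\eta]$ by the remark following Lemma~\ref{small bound}); in particular they are deterministic, so $h_{2}(\mathbb{E}[X_{T}^{t,\eta}])$ is a constant and the generator of the mean field BSDE~\eqref{mean field bsde} appearing in the proof of Theorem~\ref{phi theorem} depends on $\omega$ only through $\varphi_{s}$ and $\mathbb{E}[\varphi_{s}]$. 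Second, the backward ODE obtained from \eqref{mean field bsde} by replacing $\mathbb{E}[\varphi_{s}]$ with $\varphi_{s}$ and deleting the stochastic integral has a right-hand side which is Lipschitz in the unknown (as $P$ is bounded by Lemma~\ref{P bound} and $b_{0},f_{0}$ are Lipschitz), hence admits a unique deterministic solution $\bar{\varphi}$ on $[t,T]$; since $\bar{\varphi}$ is deterministic, $(\bar{\varphi},0)$ solves \eqref{mean field bsde}, so by uniqueness of \eqref{mean field bsde} together with \eqref{linear form} one gets $\varphi^{t,\nu}_{s}=Y_{s}^{t,\eta}-P_{s}X_{s}^{t,\eta}=\bar{\varphi}_{s}$ and $z^{t,\nu}\equiv 0$. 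Third, substituting $\varphi^{t,\nu}=\bar{\varphi}$ into the expectation ODE for $\mathbb{E}[X_{\cdot}^{t,\eta}]$ shows that $(\nu^{t,\nu},\varphi^{t,\nu})$ satisfies exactly the forward and backward equations of \eqref{markovian phi} with $\nu^{t,\nu}_{t}=\nu$ and $\varphi^{t,\nu}_{T}=h_{2}(\nu^{t,\nu}_{T})$, which yields existence and the asserted representation. Uniqueness for \eqref{markovian phi} then follows because any solution $(\tilde{\nu},\tilde{\varphi})$ of \eqref{markovian phi}, augmented by $\tilde{z}\equiv 0$, solves \eqref{v equation} (the expectations in \eqref{v equation} being redundant for deterministic arguments), so it must coincide with $(\nu^{t,\nu},\varphi^{t,\nu},z^{t,\nu})$ by Theorem~\ref{phi theorem}.

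I expect the only genuinely delicate point to be the degeneration of the mean field BSDE~\eqref{mean field bsde} to a deterministic ODE, i.e.\ the verification that $z^{t,\nu}\equiv 0$; everything else is bookkeeping built on Theorem~\ref{phi theorem}, Lemma~\ref{P bound}, and the elementary fact that expectations of the state processes satisfy deterministic equations. I would in particular be careful to check that $h_{2}(\mathbb{E}[X_{T}^{t,\eta}])$ is deterministic and that the generator of \eqref{mean field bsde} carries no residual $\omega$-dependence beyond $\varphi$ and $\mathbb{E}[\varphi]$, since that is exactly what makes the reduction to \eqref{markovian phi} valid.
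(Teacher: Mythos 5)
Your proposal is correct and takes essentially the same route as the paper, whose own proof is just the one-line remark that the argument for Theorem \ref{phi theorem} carries over with a slight modification: your elaboration --- using uniqueness of the mean field BSDE \eqref{mean field bsde} to conclude that $z^{t,\nu}\equiv 0$ and $\varphi^{t,\nu}$ is deterministic once $b_{0},f_{0},h_{2}$ are deterministic (so that the expectations $\mathbb{E}[X^{t,\eta}]$, $\mathbb{E}[Y^{t,\eta}]$ solve a closed deterministic ODE system and \eqref{v equation} collapses to \eqref{markovian phi}) --- is exactly the intended modification. Your uniqueness step, embedding any solution of \eqref{markovian phi} into \eqref{v equation} with vanishing martingale part and invoking the uniqueness asserted in Theorem \ref{phi theorem}, is likewise sound.
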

   \begin{proof}
    The proof follows immediately by a slight modification of the proof of Theorem \ref{phi theorem}.
   \end{proof}
 
Now we define the funtion $\Phi:[0,T]\times\mathbb{R}\rightarrow \mathbb{R}^{n}$ as
\begin{equation}
    \Phi(t,\nu) = \varphi_{t}^{t,\nu},\label{phi decoupling}
\end{equation}
and further it holds that $\varphi_{s}^{t,\nu}=\Phi(s,\nu_{s}^{t,\nu})$, for all $s \in [t,T]$ (see \cite{delarue2002existence}), which implies that $\Phi(t,\nu)$ 
corresponds to the following PDE
\begin{equation}
\left\{
    \begin{aligned}
     &\partial_{t}\Phi(t,\nu)+\partial _{\nu}\Phi(t,\nu)\left[(b_{1}(t)+b_{2}(t)P_{t})\nu+b_{2}(t)\Phi(t,\nu)+b_{0}(t,\nu,P_{t}\nu+\Phi(t,\nu))\right]\\
     &+(b_{2}(t) P_{t})\Phi(t,\nu)+f_{2}(t)\Phi(t,\nu)+f_{0}(t,\nu,P_{t}\nu+\Phi(t,\nu))+P_{t}b_{0}(t,\nu,P_{t}\nu+\Phi(t,\nu))=0,\\
     &\Phi(T,\nu) = h_{2}(\nu).
    \end{aligned}\right.\label{phi pde}
\end{equation}
We would like to show that 
 $\Phi\in C^{1,2}([0,T]\times\mathbb{R};\mathbb{R}^{n})$ to verify $\Phi$ is indeed a classical solution to \eqref{phi pde}. Now let us consider the following FBODE on $[t,T]$, which can be interpreted as a formal differentiation of \eqref{markovian phi} with respect to initial condition $\nu$:
\begin{equation}
\left\{\begin{aligned}
d \nabla \nu_s^{t, \nu}= & \left[(b_{1}(s)+b_{2}(s) P_{s})\nabla\nu_{s}^{t,\nu}+b_{2}(s) \nabla\varphi_{s}^{t,\nu}\right.\\
&\left.+\partial_{\nu}b_{0}(s,\nu_{s}^{t,\nu},P_{s}\nu^{t,\nu}_{s}+\varphi_{s}^{t,\nu})\nabla\nu_{s}^{t,\nu}+\partial_{\varphi}b_{0}(s,\nu_{s}^{t,\nu},P_{s}\nu^{t,\nu}_{s}+\varphi_{s}^{t,\nu})\nabla\varphi_{s}^{t,\nu}\right]ds\\
d \nabla \varphi_s^{t, \nu}= & \left[- 
f_{2}(s)\nabla \varphi_{s}^{t,\nu} -P_{s}( b_{2}(s)\nabla \varphi_{s}^{t,\nu})-\partial_{\nu}f_{0}(s,\nu_{s}^{t,\nu},P_{s}\nu_{s}^{t,\nu}+\varphi_{s}^{t,\nu})\nabla\nu_{s}^{t,\nu}\right.\\
&-\partial_{\varphi}f_{0}(s,\nu_{s}^{t,\nu},P_{s}\nu_{s}^{t,\nu}+\varphi_{s}^{t,\nu}) \nabla\varphi_{s}^{t,\nu}-P_{s}\partial_{\nu}b_{0}(s,\nu_{t}^{t,\nu},P_{s}\nu_{s}^{t,\nu}+\varphi_{s}^{t,\nu})\nabla\nu_{s}^{t,\nu}\\
&\left.-P_{s}\partial_{\varphi}b_{0}(s,\nu_{s}^{t,\nu},P_{s}\nu_{s}^{t,\nu}+\varphi_{s}^{t,\nu})\nabla\varphi_{s}^{t,\nu}\right]ds,\\
\nabla \nu_{t}^{t,\nu} =& 1, \nabla \varphi_T^{t, \nu}=h_{2}^{\prime}\left(\nu_T^{t, \nu}\right) \nabla \nu_T^{t, \nu}.
\end{aligned}\right.\label{formal differentiation}
\end{equation}
\begin{Theorem} Under assumptions $(A1)-(A4)$, the function $\Phi \in C^{1}([0, T] \times$ $\mathbb{R};\mathbb{R}^{n})$ defined as \eqref{phi decoupling} is the unique classical solution to \eqref{phi pde} with bounded $\partial_\nu \Phi $.
\end{Theorem}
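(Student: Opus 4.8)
The plan is to follow the classical route for decoupling fields of forward--backward systems (cf. \cite{delarue2002existence,ma2015well}): linearize the system \eqref{markovian phi} in the initial datum, solve the resulting variational system, identify its solution with $\partial_\nu$ of the flow, and then recover $\partial_t\Phi$ from the equation itself. The argument is lighter here than in the diffusive case because \eqref{markovian phi} is an ODE system, so all estimates are pathwise and use only Gronwall's inequality together with the continuation/comparison machinery already set up in Sections~3--4. As preliminaries I would record: since $f_0,b_0,\sigma,h_2$ are deterministic, in \eqref{v equation} one may take $\eta\equiv\nu$, so $\Phi(t,\nu)=\varphi_t^{t,\nu}=Y_t^{t,\nu}-P_t\nu$, and by \eqref{tildeU} in Lemma \ref{small duration decoupling} together with the bound on $P$ from Lemma \ref{P bound}, the map $\nu\mapsto\Phi(t,\nu)$ is Lipschitz with a constant $C$ independent of $t$; the flow identity $\varphi_s^{t,\nu}=\Phi(s,\nu_s^{t,\nu})$ holds for $s\in[t,T]$; and $s\mapsto(\nu_s^{t,\nu},\varphi_s^{t,\nu})$ is $C^1$, being the solution of an ODE with continuous coefficients. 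Write $G_\nu,G_\varphi$ for the drifts of $\nu$ and $\varphi$ in \eqref{markovian phi}; under (A1)--(A4) their partial derivatives in $(\nu,\varphi)$ are continuous and bounded.

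The core step is to show that $\nu\mapsto(\nu^{t,\nu},\varphi^{t,\nu})$ is differentiable with derivative the unique solution of \eqref{formal differentiation}. Fix $(t,\nu)$ and $h\neq0$, and set $\nabla^h\nu_s:=h^{-1}(\nu_s^{t,\nu+h}-\nu_s^{t,\nu})$, $\nabla^h\varphi_s:=h^{-1}(\varphi_s^{t,\nu+h}-\varphi_s^{t,\nu})$. These solve a linear FBODE whose coefficients are mean-value (integrated-derivative) versions of $\partial_\nu b_0,\partial_\varphi b_0,\partial_\nu f_0,\partial_\varphi f_0,h_2'$; by (A1)--(A4) they are uniformly bounded and, as $h\to0$, converge locally uniformly to the coefficients of \eqref{formal differentiation} (this is where the $C^1$-regularity in (A4) is used). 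The uniform Lipschitz property of $\Phi$ yields the a priori bound
\[
|\nabla^h\varphi_s|=\frac{\bigl|\Phi(s,\nu_s^{t,\nu+h})-\Phi(s,\nu_s^{t,\nu})\bigr|}{|h|}\le C\,|\nabla^h\nu_s|,
\]
and, inserting this into the forward equation for $\nabla^h\nu$ (bounded drift, $\nabla^h\nu_t=1$), Gronwall's inequality gives $|\nabla^h\nu_s|+|\nabla^h\varphi_s|\le C'$, uniformly in $h$ and in $(t,\nu)$. Equicontinuity from the bounded drifts and Arzel\`a--Ascoli produce a subsequential limit $(\nabla\nu^{t,\nu},\nabla\varphi^{t,\nu})$, which solves \eqref{formal differentiation}. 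Since \eqref{formal differentiation} is linear with bounded coefficients it is solvable on any subinterval of length $\le\delta=\delta(C)$ by contraction, and a pasting argument as in the proof of Theorem \ref{global existence} (using the $C$-Lipschitz bound on $\Phi$ to keep the decoupling field of \eqref{formal differentiation} bounded) gives global existence and uniqueness; uniqueness forces the whole family $(\nabla^h\nu,\nabla^h\varphi)$ to converge, so $\nu\mapsto(\nu^{t,\nu},\varphi^{t,\nu})$ is differentiable and, evaluating at $s=t$ where $\nabla\nu_t^{t,\nu}=1$, one gets $\partial_\nu\Phi(t,\nu)=\nabla\varphi_t^{t,\nu}$ with $|\partial_\nu\Phi(t,\nu)|\le C$ by the Lipschitz bound on $\Phi$ --- the asserted boundedness.

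It remains to obtain the $t$-regularity and to verify \eqref{phi pde}. Joint continuity of $(t,\nu)\mapsto\Phi(t,\nu)$ and of $(t,\nu)\mapsto\partial_\nu\Phi(t,\nu)=\nabla\varphi_t^{t,\nu}$ follows from continuous dependence of the solutions of \eqref{markovian phi} and \eqref{formal differentiation} on the data $(t,\nu)$. For the time derivative, fix $\nu$ and for $t<t'$ use $\Phi(t,\nu)=\varphi_t^{t,\nu}$, the $\varphi$-dynamics and the flow identity $\varphi_{t'}^{t,\nu}=\Phi(t',\nu_{t'}^{t,\nu})$ to write
\[
\Phi(t,\nu)-\Phi(t',\nu)=\bigl[\Phi(t',\nu_{t'}^{t,\nu})-\Phi(t',\nu)\bigr]-\int_t^{t'}G_\varphi\bigl(s,\nu_s^{t,\nu},\varphi_s^{t,\nu}\bigr)\,ds;
\]
since $\Phi(t',\cdot)$ is $C^1$ and $\nu_{t'}^{t,\nu}-\nu=\int_t^{t'}G_\nu(s,\nu_s^{t,\nu},\varphi_s^{t,\nu})\,ds$, dividing by $t'-t$ and letting $t'\downarrow t$ shows that $\Phi(\cdot,\nu)$ has a one-sided derivative equal to $G_\varphi(t,\nu,\Phi(t,\nu))-\partial_\nu\Phi(t,\nu)\,G_\nu(t,\nu,\Phi(t,\nu))$, which is continuous; hence $\partial_t\Phi$ exists and is continuous, so $\Phi\in C^1([0,T]\times\mathbb{R};\mathbb{R}^n)$, and rearranging the last identity is exactly \eqref{phi pde}. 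Uniqueness follows immediately: given another classical solution $\tilde\Phi$ with bounded $\partial_\nu\tilde\Phi$, set $\tilde\varphi_s:=\tilde\Phi(s,\tilde\nu_s)$ with $\tilde\nu$ the solution of the forward equation of \eqref{markovian phi} with $\tilde\varphi$ substituted; the chain rule and \eqref{phi pde} show that $(\tilde\nu,\tilde\varphi)$ solves \eqref{markovian phi}, so Theorem \ref{markovian phi theorem} forces $(\tilde\nu,\tilde\varphi)=(\nu^{t,\nu},\varphi^{t,\nu})$ and hence $\tilde\Phi(t,\nu)=\tilde\varphi_t=\varphi_t^{t,\nu}=\Phi(t,\nu)$.

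The main obstacle is the differentiation-in-$\nu$ step: a priori one only gets subsequential limits of the difference quotients, and linear FBODEs need not be globally uniquely solvable, so convergence of the full family and well-posedness of \eqref{formal differentiation} are not automatic. The resolution exploits the uniform Lipschitz bound on $\Phi$ from Section~3, which both supplies the displayed a priori estimate and, via the short-interval-plus-pasting mechanism, controls the decoupling field of \eqref{formal differentiation} and thereby yields its global well-posedness, pinning down the limit; the role of (A4) is precisely to make the coefficients of the difference-quotient equation converge to those of \eqref{formal differentiation}.
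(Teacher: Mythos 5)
Your proposal is correct and follows essentially the same route as the paper: uniform Lipschitz continuity of $\Phi$ in $\nu$ from Lemmas \ref{small bound} and \ref{P bound}, short-interval well-posedness of the linear variational FBODE \eqref{formal differentiation} pasted backward using that Lipschitz bound, identification of its solution with $\partial_\nu\Phi$, and uniqueness by reconstructing a solution of \eqref{markovian phi} from any classical solution of \eqref{phi pde}. You merely spell out the difference-quotient and time-derivative steps that the paper compresses into ``standard arguments.''
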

\begin{proof} From Theorem \ref{markovian phi theorem}, we know
\begin{equation}
     \Phi\left(t, \nu\right)=Y_{t}^{t,\eta}-P_{t}X_{t}^{t,\eta}, \quad \forall t\in[0,T],\label{phi definition}
 \end{equation}
 where $X^{t,\eta},Y^{t,\eta}$ are the first two components of the unique solution of mean field FBSDE \eqref{mean field fbsde} with initial condition $\eta\in L^{2}(\Omega,\mathcal{F}_{t},\mathbb{P};\mathbb{R})$ satisfying $\mathbb{E}[\eta] = \nu$ and $P$ is the unique solution of \eqref{p equation}.
 Taking expectation on the both side of
\eqref{phi definition}, we get
\begin{equation*}
\Phi(t,\nu)=
    \mathbb{E}[Y_{t}^{t,\eta}] - P_{t}\nu, \quad \forall t\in[0,T].
\end{equation*}
According to Lemma \ref{small bound} and Lemma \ref{P bound}, we obtain 
\begin{equation}
    \left|\Phi(t,\nu_{1})-\Phi(t,\nu_{2})\right| \leq C\left|\nu_{1}-\nu_{2}\right|, \forall \nu_{1},\nu_{2} \in \mathbb{R},\quad \forall t\in[0,T],\label{phi uniform}
\end{equation}
where C only depending on $n,K,T$. 

Now we would like to show that $\Phi\in C^{1}([0,T]\times \mathbb{R};\mathbb{R}^{n})$ with bounded $\partial_{\nu}\Phi$. 
 Let us consider the linear FBODE \eqref{formal differentiation} on $\left[t, T\right]$ for any $t\in[0, T)$. Note that under assumptions (A1)-(A4), all the coefficients in FBODE \eqref{formal differentiation} are bounded by some chosen $C_1 \geq C$. By standard FBODE arguments, there exists some $\delta_0>0$ depending on $C_{1}$ such that the FBODE \eqref{formal differentiation} is well-posed on $\left[T-\delta_0, T\right]$, which implies that $\partial_{\nu} \Phi \in C^{0}([T-\delta,T]\times \mathbb{R};\mathbb{R}^{n})$. Combined with \eqref{phi uniform}, following standard arguments, we obtain that $\Phi\in C^{1}([T-\delta,T]\times \mathbb{R};\mathbb{R}^{n})$. We then consider the FBODE \eqref{formal differentiation} with $h_{2}(\cdot)$ replaced by $\Phi\left(T-\delta_0, \cdot\right)$. According to \eqref{phi uniform}, the FBODE \eqref{formal differentiation} is also well-posed on $\left[T-2 \delta_0, T-\delta_0\right]$. Repeating this procedure backwardly finitely many
times, we are able to show that the FBODE \eqref{formal differentiation} is well-posed on $\left[t, T\right]$ for any $t\in[0,T]$ and  $\Phi \in C^{1}([0, T] \times \mathbb{R};\mathbb{R}^{n})$ with bounded $\partial_{\nu}\Phi$. Consequently, $\Phi$ is a classical solution of PDE \eqref{phi pde}.\\
\textbf{Uniqueness:} Suppose that $\tilde{\Phi}\in C^{1}([0, T] \times \mathbb{R};\mathbb{R}^{n})$ is another classical solution to \eqref{phi pde} with bounded $\partial_\nu \tilde{\Phi}$. For any $\left(t, \nu\right) \in[0, T] \times \mathbb{R}$, we first consider the following well-posed ODE
\begin{equation*}
\left\{\begin{aligned}
d \tilde{\nu}_s^{t, \nu} &= \left[ (b_{1}(s) +b_{2}(s) P_{s})\tilde{\nu}_s^{t, \nu} + b_{2}(s) \tilde{\Phi}(s,\tilde{\nu}_s^{t, \nu}) +b_{0}(s,\tilde{\nu}_s^{t, \nu},P_{s}\tilde{\nu}_s^{t, \nu}+\tilde{\Phi}(s,\tilde{\nu}_s^{t, \nu}))\right]ds,\\
\tilde{\nu}_{t}^{t, \nu} &=   \nu.
\end{aligned}\right.
\end{equation*}
Let $\tilde{\varphi}_s^{t, \nu}:=\tilde{\Phi}\left(s, \tilde{\nu}_s^{t, \nu}\right)$. Since $\tilde{\Phi}$ is a classical solution to \eqref{phi pde}, it can be easily checked that $\tilde{\varphi}^{t, \nu}$ solves the backward ordinary differential equation in \eqref{markovian phi}. Therefore we have verified that $\left(\tilde{\nu}^{t, \nu}, \tilde{\varphi}^{t, \nu}\right)$ is a solution to  FBODE \eqref{markovian phi}. Therefore, the uniqueness result follows by the well-posedness of the FBODE \eqref{markovian phi}.
\end{proof}

\begin{Theorem}
 Let Assumptions $(A1)-(A4)$ hold, then the function 
 \begin{equation}
U(t,x,\nu):=P_{t}x+\Phi(t,\nu)\label{U definition}
 \end{equation}
is the unique classical solution to the master equation \eqref{master equation} with bounded $\partial_{x}U,\partial_{\nu}U$.
\end{Theorem}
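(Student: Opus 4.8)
The plan is to verify by direct substitution that $U(t,x,\nu):=P_{t}x+\Phi(t,\nu)$ solves the master equation \eqref{master equation}, and then to obtain uniqueness through a two-stage characteristics argument that mirrors the passage from \eqref{mean field fbsde} to \eqref{fbsde}.

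For existence, note first that $U$ inherits the regularity of $P$ and $\Phi$: one has $\partial_{x}U=P_{t}$, $\partial_{xx}U\equiv 0$, $\partial_{\nu}U=\partial_{\nu}\Phi(t,\nu)$ and $\partial_{t}U=\dot P_{t}x+\partial_{t}\Phi(t,\nu)$, so by Lemma \ref{P bound} and the preceding theorem $\partial_{x}U$ and $\partial_{\nu}U$ are continuous and bounded. Since $\mathbb{E}[\eta]=\nu$ and $U$ is affine in $x$, the nonlocal term reduces to $\mathbb{E}[U(t,\eta,\nu)]=P_{t}\nu+\Phi(t,\nu)$, a function of $(t,\nu)$ only, so that \eqref{master equation} is unambiguous. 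Plugging $U=P_{t}x+\Phi$ into the left-hand side of \eqref{master equation} and collecting the terms proportional to $x$ leaves
\[
\bigl(\dot P_{t}+b_{1}(t)P_{t}+(b_{2}(t)P_{t})P_{t}+f_{2}(t)P_{t}+f_{1}(t)\bigr)x,
\]
which vanishes by the Riccati equation \eqref{p equation}; the remaining $x$-free terms coincide exactly with the left-hand side of the PDE \eqref{phi pde}, hence are zero because $\Phi$ solves \eqref{phi pde}. The terminal condition is immediate from $P_{T}=h_{1}$ and $\Phi(T,\nu)=h_{2}(\nu)$. Observe that the term $\tfrac12\partial_{xx}U|\sigma|^{2}$ drops out because $\partial_{xx}U\equiv 0$, irrespective of the $x$-dependence of $\sigma$.

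For uniqueness, let $\tilde U$ be another classical solution of \eqref{master equation} with bounded $\partial_{x}\tilde U,\partial_{\nu}\tilde U$; these bounds make $\tilde U$ Lipschitz in $(x,\nu)$. Fix $(t,x,\nu)$ and $\eta\in L^{2}(\Omega,\mathcal{F}_{t},\mathbb{P};\mathbb{R})$ with $\mathbb{E}[\eta]=\nu$. At the population level, solve the McKean--Vlasov forward SDE with $\tilde U$ as feedback (well-posed by the Lipschitz bound), set $\tilde Y_{s}:=\tilde U(s,\tilde X_{s},\mathbb{E}[\tilde X_{s}])$ and $\tilde Z_{s}:=\partial_{x}\tilde U(s,\tilde X_{s},\mathbb{E}[\tilde X_{s}])\,\sigma(s,\tilde X_{s},\tilde Y_{s},\mathbb{E}[\tilde X_{s}],\mathbb{E}[\tilde Y_{s}])$, and apply It\^o's formula to $s\mapsto\tilde U(s,\tilde X_{s},\mathbb{E}[\tilde X_{s}])$ (the mean being a differentiable deterministic curve); the master equation then turns $(\tilde X,\tilde Y,\tilde Z)$ into a solution of \eqref{mean field fbsde} with datum $\eta$, so by Theorem \ref{global existence} it equals $(X^{t,\eta},Y^{t,\eta},Z^{t,\eta})$, whence $\mathbb{E}[\tilde X_{s}]=\mathbb{E}[X^{t,\eta}_{s}]$ and $\mathbb{E}[\tilde Y_{s}]=\mathbb{E}[Y^{t,\eta}_{s}]$. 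Freezing these now-deterministic mean-field inputs, solve the forward SDE started at the deterministic point $x$, define $(Y,Z)$ analogously, and apply It\^o's formula with the master equation once more to see that this triple solves the decoupled FBSDE \eqref{fbsde}; by the uniqueness in Lemma \ref{small duration decoupling}, together with the identity $Y^{t,x,\eta}_{t}=P_{t}x+\Phi(t,\nu)$ coming from the proof of Theorem \ref{phi theorem} carried out for \eqref{fbsde} and the definition \eqref{U definition}, we conclude $\tilde U(t,x,\nu)=Y_{t}=Y^{t,x,\eta}_{t}=U(t,x,\nu)$. Since $(t,x,\nu)$ is arbitrary, $\tilde U\equiv U$.

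The step I expect to be the main obstacle is the uniqueness argument, specifically the well-posedness of the McKean--Vlasov characteristic and the verification, via It\^o's formula, that the constructed triples solve \eqref{mean field fbsde} and then \eqref{fbsde}; one must in particular keep track of the nonlocal argument in \eqref{master equation}, which evaluated along the constructed population is exactly $\mathbb{E}[\tilde U(s,\tilde X_{s},\mathbb{E}[\tilde X_{s}])]$, i.e. the quantity appearing in the characteristic system. The existence part, by contrast, is a routine substitution resting on the Riccati cancellation.
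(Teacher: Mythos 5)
Your proposal is correct and follows essentially the same route as the paper: existence by direct substitution of $U=P_tx+\Phi(t,\nu)$, with the $x$-linear terms cancelling via the Riccati equation \eqref{p equation} and the remaining terms reducing to the PDE \eqref{phi pde}, and uniqueness by identifying any classical solution with bounded gradients as the decoupling field of \eqref{mean field fbsde} and invoking the FBSDE well-posedness. Your uniqueness part merely spells out in detail (the two-stage McKean--Vlasov/frozen-coefficient characteristics and It\^o verification) what the paper compresses into one sentence, and your observation that the nonlocal term $\mathbb{E}[U(t,\eta,\nu)]$ is unambiguous because $U$ is affine in $x$ is a useful clarification of the paper's formulation.
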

\begin{proof}
\textbf{Existence:} First, by Theorem \ref{markovian phi theorem}, we know for $t\in[0,T]$, $s\in[t,T]$, 
\begin{equation}
    Y_{s}^{t,\eta} = P_{s}X^{t,\eta}_{s} + \varphi_{s}^{t,\nu}= P_{s}X^{t,\eta}_{s}+\Phi(s,\mathbb{E}[X^{t,\eta}_{s}]) = U(t,X^{t,\eta}_{s},\mathbb{E}[X_{s}^{t,\eta}]),\label{U decoupling}
\end{equation}
where $\eta \in L^{2}(\Omega,\mathcal{F}_{t},\mathbb{P};\mathbb{R})$ with $\mathbb{E}[\eta] = \nu$. Therefore, the function $U(t,x,\nu)$ is the decoupling field of mean field FBSDE \eqref{mean field fbsde}. \\
\indent Next, we verify the decoupling field $U$ satisfies the master equation \eqref{master equation}. We first check that $U$ satisfies the terminal condition
\begin{equation*}
U(T, x, \nu)=P_T x+\Phi(T, \nu)=h_{1}x+h_{2}(\nu).
\end{equation*}
Moreover, it follows from \eqref{U decoupling} by setting $s=t$ and taking expectation that
\begin{equation}
P_{t}\nu+\Phi(t,\nu)=\mathbb{E}[U(t,\eta,\nu)],\label{u expectation}
\end{equation}
where $\eta \in L^{2}(\Omega,\mathcal{F}_{t},\mathbb{P};\mathbb{R})$ with $\mathbb{E}[\eta] = \nu$.\\
Recalling \eqref{p equation} and \eqref{phi pde}, we obtain
\begin{equation}
\begin{aligned}
 \partial_t U(t, x, \nu)&=\partial_tP_t x+\partial_t \Phi(t, \nu) \\
& =-\left((b_{2}(t) P_{t})P_{t}+f_{2}(t) P_{t}+b_{1}(t)P_{t}+f_{1}(t)\right) x-\partial _{\nu}\Phi(t,\nu)\left[(b_{1}(t)+b_{2}(t)P_{t})\nu\right.\\
&\left.\quad +b_{2}(t)\Phi(t,\nu)+b_{0}(t,\nu,P_{t}\nu+\Phi(t,\nu))\right]
     -(b_{2}(t))\Phi(t,\nu))P_{t}-f_{2}(t)\Phi(t,\nu)\\
     &\quad -f_{0}(t,\nu,P_{t}\nu+\Phi(t,\nu))-P_{t}b_{0}(t,\nu,P_{t}\nu+\Phi(t,\nu)).
\end{aligned}\label{master equation2}
\end{equation}
Moreover, we have
\begin{equation}
\partial_x U(t, x, \nu)=P_t, \quad \partial_{x x} U(t, x, \nu)=0, \quad \partial_\nu U(t, x, \nu)=\partial_\nu \Phi(t, \nu)
\end{equation}
are all bounded. Plugging the above terms into \eqref{master equation} and using \eqref{U definition}, \eqref{u expectation}, it is straightforward to show that $U$ is a classical solution to the master equation \eqref{master equation}.\\
\textbf{Uniqueness:} We recall that the solution $U$ to \eqref{master equation} serves as the decoupling field of mean field FBSDE \eqref{mean field fbsde}. Following the uniqueness argument in Theorem \ref{global existence}, the well-posedness of \eqref{mean field fbsde} implies the uniqueness of a solution to the master equation \eqref{master equation}.
\end{proof}
\bibliographystyle{siam}
\bibliography{bib}
\end{document}